\documentclass[12pt]{article}

\usepackage{amsthm,times,enumerate,amssymb,amsmath,amsfonts}

\usepackage{picinpar}
\usepackage{layout}
\usepackage{verbatim}
\usepackage{epsfig}
\usepackage{setspace}

\newtheorem{theorem}{Theorem}[section]
\newtheorem*{theorem*}{Theorem}
\newtheorem{proposition}[theorem]{Proposition}
\newtheorem{lemma}[theorem]{Lemma}

\newtheorem{corollary}[theorem]{Corollary}
\newtheorem*{corollary*}{Corollary}

\newtheorem*{conjecture*}{Conjecture}

\theoremstyle{remark}

\newtheorem*{remark}{Remark}

\theoremstyle{definition}
\newtheorem{definition}[theorem]{Definition}

\newtheorem{example}[theorem]{Example}
\numberwithin{figure}{section}

\def\R{\mathbb R}
\def\Z{\mathbb Z}

\def\S{\mathcal S}
\def\C{\mathcal C}
\def\Re{\mathcal R}

\title{Global asymptotic stability for a class of nonlinear
  chemical equations} \author{David F. Anderson$^{1}$}

\begin{document}

\maketitle

\footnotetext[1]{Department of Mathematics, University of Wisconsin, Madison,
  WI, 53706}

\begin{abstract}
  We consider a class of nonlinear differential equations that arises
  in the study of chemical reaction systems that are known to be
  locally asymptotically stable and prove that they are in fact
  globally asymptotically stable.  More specifically, we will consider
  chemical reaction systems that are weakly reversible, have a
  deficiency of zero, and are equipped with mass action kinetics.  We
  show that if for each $c \in \R_{> 0}^m$ the intersection of the
  stoichiometric compatibility class $c + S$ with the subsets on the
  boundary that could potentially contain equilibria, $L_W$, are at
  most discrete, then global asymptotic stability follows.  Previous
  global stability results for the systems considered in this paper
  required $(c + S) \cap L_W = \emptyset$ for each $c \in \R^m_{> 0}$,
  and so this paper can be viewed as an extension of those works.
\end{abstract}

\section{Introduction}

This paper is motivated by the consideration of a class of nonlinear
systems that arises in the study of chemistry and biochemistry.
Suppose there are $m$ chemical species, $\{X_1,\dots,X_m\}$,
undergoing a series of chemical reactions.  For a given reaction,
denote by $y, y' \in \Z^m_{\ge 0}$ the vectors representing the number
of molecules of each species consumed and created in one instance of
that reaction, respectively.  Using a slight abuse of notation, we
associate each such $y$ (and $y'$) with a linear combination of the
species in which the coefficient of $X_i$ is $y_i$.  For example, if
$y = [1, \ 2, \ 3]^T$ for a system consisting of three species, we
associate with $y$ the linear combination $X_1 + 2X_2 + 3X_3$.  Under
this association, each $y$ (and $y'$) is termed a {\em complex} of the
system.  We may now denote any reaction by the notation $y \to y'$,
where $y$ is the source, or reactant, complex and $y'$ is the product
complex.  We note that each complex will typically appear as both a
source complex and a product complex in the system. Let $\S = \{X_i\},
\C = \{y\},$ and $\Re = \{y \to y'\}$ denote the sets of species,
complexes, and reactions, respectively.  Denote the concentration
vector of the species as $x \in \R^m$.  In order to know how the state
of the system is changing, we need to know the rate at which each
reaction is taking place.  Therefore, for each reaction, $y \to y'$,
there is a $C^1$ function $R_{y \to y'}(\cdot)$ satisfying the
following:
\begin{enumerate}
\item $R_{y \to y'}(\cdot)$ is a function of the concentrations of
  those species contained in the source complex, $y$.
\item $R_{y \to y'}(\cdot)$ is monotone increasing in each of its
  inputs and $R_{y \to y'}(x) = 0$ if {\em any} of its inputs are
  zero.
\end{enumerate}
The dynamics of the system are then given by
\begin{equation}
  \dot x(t) = \sum_{y \to y' \in \Re} R_{y \to y'}(x(t))(y' - y) \
  \dot{=} \ f(x(t)),
  \label{eq:main}
\end{equation}
where the last equality is a definition.  The functions $R_{y \to y'}$
are typically referred to as the {\em kinetics} of the system.  This
notation closely matches that of Feinberg, Horn, and Jackson, and it
is their results that the main theorem in this paper extends
(\cite{HornJack72, Feinberg72, Feinberg87, Feinberg95a,
  FeinbergLec79}).

Integrating equation \eqref{eq:main} gives
\begin{equation*}
  x(t) = x(0) + \sum_{y \to y' \in \Re} \left(\int_0^t
    R_{y \to y'}(x(s)) ds \right) (y' - y).
\end{equation*}
Therefore, $x(t) - x(0)$ remains in the linear space $S = $ Span$\{y'
- y\}_{y \to y' \in \Re}$ for all time.  We shall refer to the space
$S$ as the stoichiometric subspace of the system and refer to the sets
$c + S$, for $c \in \R^m$, as stoichiometric compatibility classes, or
just compatibility classes.  Later we will demonstrate that
trajectories with positive initial conditions remain in $\R^m_{>0}$
for all time.  The sets $(c + S) \cap \R^m_{>0}$ will therefore be
referred to as the {\em positive stoichiometric compatibility
  classes}.  Given that trajectories remain in their positive
stoichiometric compatibility classes for all time, we see that the
types of questions that one should ask about these systems differ from
the questions one normally asks about nonlinear systems.  For example,
instead of asking whether there is a unique equilibrium value to the
system \eqref{eq:main}, and then asking about its stability
properties, it is clearly more appropriate to ask whether there is a
unique equilibrium {\em within each positive stoichiometric
  compatibility class} and, if so, what are its stability properties
{\em relative to its compatibility class}.

The most common kinetics chosen is that of {\em mass action kinetics}.
A chemical reaction system is said to have mass action kinetics if
\begin{equation}
  R_{y \to y'}(x) = \ k_{y \to y'}x_1^{y_1} x_2^{y_2} \cdots x_m^{y_m}, 
  \label{eq:massaction}
\end{equation}
for some constant $k_{y \to y'}$.  It has been shown that for many
systems of the form \eqref{eq:main} with mass action kinetics, there
is within each positive stoichiometric compatibility class precisely
one equilibrium and that equilibrium is locally asymptotically stable
relative to its class (\cite{HornJack72, Feinberg87, FeinbergLec79}).
In order to show that the equilibrium values are locally stable, the
following Lyapunov function is used (one for each compatibility class)
\begin{equation}
  V(x,\bar x) = V(x) = \sum_{i=1}^m \left[ x_i(\ln(x_i) - \ln(\bar
    x_i) - 1) + \bar x_i \right],
  \label{eq:lyapunov}
\end{equation}
where $\bar x$ is the unique equilibrium of a given positive
stoichiometric compatibility class.  It turns out that the function
$V$ ``almost'' acts as a global Lyapunov function.  That is, $V$ is
non-negative for $x \in (\bar x + S) \cap \R^m_{>0}$, zero only at
$\bar x$, and strictly decreases along trajectories.  However, $V$
does not tend to infinity as trajectories near the boundary of $(\bar
x + S) \cap \R^m_{>0}$, and without such unboundedness one can not, in
general, conclude global stability.  It has been shown in numerous
papers, however, that global stability of $\bar x$ does hold if there
are no equilibria on the boundary of $(\bar x + S) \ \cap \R^m_{>0}$
(\cite{Sontag2001, Sontag2007, Siegel1994, Siegel2000}).  Therefore,
work has been done giving sufficient conditions for the non-existence
of boundary equilibria in order to conclude that the equilibrium value
within each compatibility class is globally stable relative to its
class (\cite{Sontag2007, Siegel1994, Siegel2000}).

To each subset $W$ of the set of species, the set of boundary points
$L_W$ is defined to be
\begin{equation}
  L_W = \{x \in \R^{m} : x_i = 0 \Leftrightarrow X_i \in W\}.
  \label{eq:boundary}
\end{equation}
We will show that there are no boundary equilibria if and only if
\begin{equation}
  [(c + S) \cap \R^m_{\ge 0}] \cap L_W = \emptyset,
\label{eq:emptyintersection}
\end{equation}
for all $c \in \R^m_{>0}$ and for certain subsets of the species, $W$.
We will then prove that global stability holds if the intersection
given in equation \eqref{eq:emptyintersection} is either empty or
discrete for each $c \in \R^m_{>0}$ and those same subsets, $W$.  This
will imply that global stability holds even if there are boundary
equilibria, so long as the boundary equilibria are extreme points of
the positive stoichiometric compatibility classes.  To the best of our
knowledge there is only one other result concerning the global
stability of mass action systems with boundary equilibria, and is
contained within the Ph.D.  thesis of Madalena Chavez
(\cite{ChavezThesis}).  In order to guarantee global stability even if
there exist boundary equilibria, she requires that each boundary
equilibrium be hyperbolic with respect to its stoichiometric
compatibility class and she requires another (more technical)
condition on the stable subspaces of each boundary equilibrium (see
\cite{ChavezThesis}, pg.  106 for details).  As our result are
applicable to systems with boundary equilibria that are
non-hyperbolic, our results can be viewed as an extension of those in
\cite{ChavezThesis}.

The layout of the paper is as follows.  In Section \ref{sec:model} we
will introduce the systems we consider in this paper: weakly
reversible, deficiency zero systems with mass action kinetics.  We
will then present some preliminary results and will conclude with a
proof that global stability follows if there are no equilibria on the
boundary of the positive stoichiometric compatibility classes. No
originality is claimed for this result as it is known. Also in Section
\ref{sec:model} we demonstrate how the ``no boundary equilibria''
assumption is equivalent to equation \eqref{eq:emptyintersection}
holding for all $c \in \R^m_{>0}$ and certain subsets of the species,
$W$.  In Section \ref{sec:results} we extend the previous theorems to
prove that global stability still holds if the intersection given in
equation \eqref{eq:emptyintersection} is always either empty {\em or}
discrete for those same subsets, $W$.  We also show in Section
\ref{sec:results} how the hypothesis that the intersection in equation
\eqref{eq:emptyintersection} is always empty or discrete is equivalent
to a condition on the extreme points of the non-negative
stoichiometric compatibility classes.  In Section \ref{sec:examples},
we demonstrate our results on a number of examples.  Finally, in
Section \ref{sec:more} we sketch how to extend our results to systems
with non-mass action kinetics.

\section{Preliminary results}
\label{sec:model}

We start with definitions taken from \cite{HornJack72},
\cite{FeinbergLec79}, and \cite{Feinberg87}.

\begin{definition}
  A chemical reaction network, $\{\S,\C,\Re \}$, is called {\em weakly
    reversible} if for any reaction $y \to y'$, there is a sequence of
  directed reactions beginning with $y'$ and ending with $y$.  That
  is, there exists complexes $y_1,\dots,y_k$ such that the following
  reactions are in $\Re$: $y' \to y_1$, $y_1 \to y_2$, $\dots$, $y_k
  \to y$.
  \label{def:WR}
\end{definition}

To each reaction system, $\{\S,\C,\Re\}$, there is a unique, directed
graph constructed in the following manner.  The nodes of the graph are
the complexes, $\C$.  A directed edge is then placed from complex $y$
to complex $y'$ if and only if $y \to y' \in \Re$.  Each connected
component of the resulting graph is termed a {\em linkage class} of
the graph.  We denote the number of linkage classes by $l$.

As shown in the introduction, each trajectory remains in its
stoichiometric compatibility class for all time.  There is another
restriction on the trajectories of solutions to equation
\eqref{eq:main} that is given in the following lemma.  The proof can
be found in both \cite{Sontag2001} and \cite{AndThesis}.
\begin{lemma}
  Let $x(t)$ be a solution to \eqref{eq:main} with initial condition
  $x(0) \in \R^m_{> 0}$.  Then, $x(t) \in \R^m_{> 0}$, for all $t > 0$.
  \label{lemma:pos}
\end{lemma}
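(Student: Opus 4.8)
The plan is to show that each coordinate $x_i(t)$ is bounded below by a strictly positive quantity on every compact subinterval of the interval of existence, by way of a Gr\"onwall-type differential inequality; positivity of $x(t)$ for all $t>0$ is then immediate. The point is that whenever $x_i$ is small, every reaction that consumes $X_i$ proceeds slowly, so $x_i$ cannot reach $0$ in finite time.

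First I would fix an index $i$ and isolate, in the right-hand side
\[
  \dot x_i(t) = \sum_{y \to y' \in \Re} R_{y \to y'}(x(t))(y_i' - y_i),
\]
the terms coming from reactions that consume $X_i$. Every term with $y_i = 0$ has $y_i' - y_i = y_i' \ge 0$ and $R_{y\to y'} \ge 0$ on $\R^m_{\ge 0}$ (by monotonicity together with the vanishing condition), so it is nonnegative and may be discarded; for the remaining terms $y_i' - y_i \ge -y_i$, giving
\[
  \dot x_i(t) \ \ge \ -\sum_{y \to y' :\, y_i \ge 1} y_i\, R_{y \to y'}(x(t)).
\]
The key step is to extract a factor of $x_i$ from each $R_{y\to y'}$ with $y_i \ge 1$. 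Since such an $R_{y\to y'}$ is $C^1$ and vanishes whenever its $i$-th input is zero, applying the fundamental theorem of calculus along the segment joining $x$ to the point obtained by setting the $i$-th coordinate to $0$ (which stays in $\R^m_{\ge 0}$ by convexity) yields $R_{y\to y'}(x) = x_i\, h_{y\to y'}(x)$, where $h_{y\to y'}(x) = \int_0^1 \partial_i R_{y\to y'}(x_1,\dots,s x_i,\dots,x_m)\,ds$ is continuous on $\R^m_{\ge 0}$. Substituting gives $\dot x_i(t) \ge -x_i(t)\,\varphi_i(x(t))$ with $\varphi_i := \sum_{y\to y':\, y_i\ge 1} y_i\, h_{y\to y'}$ continuous.

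To finish, fix any $T$ strictly less than the right endpoint of the maximal interval of existence of $x(\cdot)$. On $[0,T]$ the solution is continuous, so its image lies in a compact subset of $\R^m_{\ge 0}$ on which $\varphi_i$ is bounded, say by $M$; then $\dot x_i \ge -M x_i$ on $[0,T]$, and Gr\"onwall's inequality gives $x_i(t) \ge x_i(0) e^{-Mt} > 0$ for $t \in [0,T]$. As $i$ and $T$ were arbitrary, $x(t) \in \R^m_{>0}$ throughout the interval of existence, which proves the lemma. I expect the only genuine subtlety to be the extraction of the factor $x_i$ from a merely $C^1$ kinetics --- the Hadamard-type argument above --- together with the order of the logic: the a priori bound on $\varphi_i(x(t))$ must be obtained before global existence (or even positivity) is known, which is why the estimate is carried out on compact subintervals of the maximal interval of existence, where continuity of the solution alone supplies the bound. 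For the mass action case \eqref{eq:massaction} the factorization is of course trivial, since $x_i \mid x_1^{y_1}\cdots x_m^{y_m}$ whenever $y_i \ge 1$.
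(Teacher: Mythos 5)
Your strategy --- extract a factor of $x_i$ from every rate that consumes $X_i$ via the Hadamard-type factorization and then apply Gr\"onwall --- is sound, and since the paper itself defers the proof of this lemma to its references, there is no in-paper argument to compare against; your route is in the same spirit as the standard ones. However, as written there is a circularity at the final step. Both the differential inequality $\dot x_i(t) \ge -x_i(t)\,\varphi_i(x(t))$ and the bound on $\varphi_i$ are only justified while $x(t)$ lies in $\R^m_{\ge 0}$: discarding the terms with $y_i = 0$ uses $R_{y\to y'}(x(t)) \ge 0$, the estimate $(y_i'-y_i)R_{y\to y'}(x(t)) \ge -y_i R_{y\to y'}(x(t))$ likewise needs $R_{y\to y'}(x(t)) \ge 0$, and the segment in your factorization (as well as the region where $\varphi_i$ is known to be defined and continuous, if the kinetics live only on the closed orthant) sits inside $\R^m_{\ge 0}$. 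Yet to finish you assert that on $[0,T]$ the image of the trajectory ``lies in a compact subset of $\R^m_{\ge 0}$'' by continuity alone. Continuity gives compactness of the image, but membership in $\R^m_{\ge 0}$ is precisely (a weak form of) what the lemma asserts, so your closing remark that ``continuity of the solution alone supplies the bound'' is not correct; for kinetics extended beyond the orthant (e.g.\ mass action with odd exponents) the discarded terms can genuinely be negative once some coordinate goes negative, so the inequality cannot simply be assumed on all of $[0,T]$.

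The gap is closed by a routine first-exit-time bootstrap, which you should make explicit. Fix $T$ as you did and let $\tau = \inf\{t \in [0,T] : x(t) \notin \R^m_{>0}\}$; if this set is nonempty it is closed, so the infimum is attained, $\tau > 0$, and $x(\tau)$ has some coordinate $\le 0$, while $x(t) \in \R^m_{>0}$ for $t \in [0,\tau)$ and hence $x(t) \in \R^m_{\ge 0}$ for $t \in [0,\tau]$ by continuity. On $[0,\tau]$ your inequality and the compactness bound $\varphi_i \le M$ are now legitimate, and Gr\"onwall yields $x_i(\tau) \ge x_i(0)e^{-M\tau} > 0$ for every $i$, contradicting the definition of $\tau$. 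Hence no such exit time exists in $[0,T]$, and since $T$ was an arbitrary compact time below the right endpoint of the maximal interval, positivity holds throughout, as claimed. With this modification your argument is complete.
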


\subsection{Persistence and $\omega$-limit points}

By Lemma \ref{lemma:pos}, each trajectory must remain within
$\R^m_{>0}$ if its initial condition is in $\R^m_{>0}$, therefore the
linear subsets of interest are the intersections of the stoichiometric
compatibility classes and $\R^m_{>0}$.  Recall that in the
Introduction these sets were termed the positive stoichiometric
compatibility classes.  This paper will mainly be concerned with
showing that trajectories to systems given by equation \eqref{eq:main}
remain away from the boundary of the positive stoichiometric
compatibility classes.  That is, we will show that the systems are
{\em persistent}.  To be precise, let $\phi(t,\xi)$ be the solution to
equation \eqref{eq:main} with initial condition $\xi \in \R^m_{>0}$.
The set of $\omega$-limit points of the trajectory is
\begin{equation}
  \omega(\xi) \ \dot = \ \{x \in \R^m_{\ge 0} : \phi(t_n,\xi) \to x,
  \hbox{ for some } t_n \to \infty \}.     
  \label{omega}
\end{equation}
\begin{definition} A system is {\em persistent} if $\omega(\xi) \cap
  \partial \R^{m}_{>0} = \emptyset$, for each $\xi \in \R^{m}_{>0}$.
  \label{def:persistence}
\end{definition}
We refer the reader to \cite{Butler86a, Butler86b, Card80, Thieme2000}
for some of the history and usage of the notion of persistence in the
study of dynamical systems.  In order to show that a chemical system
is persistent, it is critical to understand which points on the
boundary are capable of being $\omega$-limit points.  With that in
mind, we introduce the following definition.

\begin{definition}
  A nonempty subset $W$ of the set of species is called a {\em
    semi-locking set} if for each reaction in which there is an
  element of $W$ in the product complex, there is an element of $W$ in
  the reactant complex.  $W$ is called a {\em locking set} if every
  reactant complex contains an element of $W$.
  \label{def:locking}
\end{definition}

Locking and semi-locking sets are easily understood.  Suppose that $W
\subset \{X_1,\dots,X_m\}$ is a locking set.  Then, because every
reactant complex contains an element of $W$, if the concentration of
each element of $W$ is zero, each kinetic function, $R_{y \to y'}$,
must equal zero.  Therefore, all of the fluxes are zero, and $\dot
x(t) = 0$.  We therefore see that the system is ``locked'' in place.
Now suppose $W$ is a semi-locking set.  If the concentration of each
element of $W$ is zero, then any flux which affects the species of $W$
is turned off and the elements of $W$ are ``locked'' at zero.
Semi-locking sets have another, important, interpretation in terms of
the linkage classes and weak reversibility.  If the concentrations of
the elements of a semi-locking set are equal to zero and the system is
weakly reversible, then all of the fluxes of any linkage class with a
complex containing an element of $W$ are equal to zero (and so these
linkage classes are ``locked''), while the fluxes of the other linkage
classes are all not equal to zero.  Therefore, the concept of a
semi-locking set and a locking set are equivalent for systems that are
weakly reversible and have only one linkage class.  We note that our
notion of a semi-locking set is analogous to the concept of a {\em
  siphon} in the theory of Petri nets.  See \cite{Sontag2007} for a
full discussion, including historical references, of the role of Petri
nets in the study of chemical reaction networks.

The following Theorem now characterizes the boundary points that have
the capability of being $\omega$-limit points of the system.  This
result was first proved in \cite{Sontag2007}, however the proof given
here is completely different and straightforward.

\begin{theorem}
  Let $W$ be a non-empty subset of the species. If there exists a $\xi
  \in \R^m_{>0}$ such that $\omega(\xi)\cap L_{W} \ne \emptyset$, then
  $W$ is a semi-locking set.
  \label{thm:omegapoints}
\end{theorem}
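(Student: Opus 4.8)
The plan is to argue by contradiction. Suppose $x^* \in \omega(\xi) \cap L_W$ for some $\xi \in \R^m_{>0}$, and suppose $W$ is \emph{not} a semi-locking set. Then by Definition \ref{def:locking} there is a reaction $y \to y'$ such that the product complex $y'$ contains a species from $W$ but the reactant complex $y$ contains no species from $W$. The idea is that at the limit point $x^*$ the coordinates indexed by $W$ vanish, yet the reaction $y \to y'$ is still ``on'' (since $y$ involves only species outside $W$, all of which have strictly positive concentration at $x^*$ by the definition of $L_W$), so this reaction is pumping mass into the $W$-coordinates at a strictly positive rate near $x^*$. This should prevent the $W$-coordinates from returning to zero along the sequence $t_n \to \infty$, contradicting $\phi(t_n,\xi) \to x^*$.

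To make this precise I would proceed as follows. First, fix the reaction $y \to y'$ as above and pick a species $X_j \in W$ with $y'_j > y_j$ (such a $j$ exists since $y'$ contains an element of $W$ but, after possibly enlarging, we can ensure a net production; if $y$ also contained that species it would have to lie in $W$ too, contradiction, so in fact $y_j = 0$ for every $X_j \in W$ appearing in $y'$). Next, since $x^*_i > 0$ for all $X_i \notin W$, continuity of $R_{y\to y'}$ and the monotonicity/positivity properties give a neighborhood $U$ of $x^*$ and a constant $\epsilon > 0$ such that $R_{y \to y'}(x) \ge \epsilon$ for all $x \in U \cap \R^m_{\ge 0}$ with $x_i$ bounded below away from $0$ for $i \notin W$; shrinking $U$, we get $R_{y \to y'}(x) \ge \epsilon$ on $U \cap \R^m_{>0}$. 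Now consider the scalar quantity $g(x) \dot= \sum_{X_j \in W} x_j$, which is $0$ at $x^*$ and positive on $\R^m_{>0}$. Differentiating along a trajectory and isolating the contribution of the single reaction $y \to y'$, the terms from all other reactions that remove mass from $W$ are bounded (by continuity, on the compact closure of a neighborhood), while the term $R_{y\to y'}(x)(y'_j - y_j) \ge \epsilon$ is bounded below; one can also use that other production terms are nonnegative. The conclusion I am aiming for is a differential inequality of the form $\dot g(x(t)) \ge \epsilon/2$ whenever $x(t)$ lies in a suitably small neighborhood of $x^*$, which forces the trajectory to leave that neighborhood in bounded time — it cannot linger near $x^*$, so $\phi(t_n,\xi)$ cannot converge to $x^*$.

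The main obstacle is controlling the \emph{negative} (consumption) terms in $\dot g$ coming from reactions other than $y \to y'$. A reaction $\tilde y \to \tilde y'$ with $\tilde y$ containing a species of $W$ contributes a term proportional to $R_{\tilde y \to \tilde y'}(x)$, which vanishes at $x^*$ since one of its reactant concentrations is $0$ there; so by continuity that term is small near $x^*$, which is exactly what we need. A reaction with $\tilde y$ avoiding $W$ but $\tilde y'$ hitting $W$ only adds a nonnegative contribution to $\dot g$, which only helps. The one case requiring care is reactions where $W$-species appear in both $\tilde y$ and $\tilde y'$: there the net coefficient could be negative, but again $R_{\tilde y \to \tilde y'}(x^*) = 0$ because $\tilde y$ contains a $W$-species, so continuity rescues us. Thus the whole argument reduces to: (i) the one good reaction gives a uniform positive push, and (ii) every potentially bad reaction has its rate continuous and vanishing at $x^*$, hence negligible on a small enough neighborhood. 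Assembling (i) and (ii) yields the differential inequality and the contradiction. Note this argument uses only the qualitative hypotheses (1)--(2) on the kinetics, so it does not even require mass action.
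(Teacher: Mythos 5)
Your setup is sound and matches the mechanism the paper uses: pick a reaction $y \to y'$ whose reactant avoids $W$ but whose product hits $W$, note its rate is bounded below near the limit point $x^*$ while every reaction that consumes $W$-species has rate vanishing at $x^*$ (its reactant contains a species whose concentration is $0$ there), and deduce a uniform positive drift on the $W$-mass near $x^*$ --- the paper does the same thing with a single coordinate $f_j(x) > k$ rather than your sum $g$. However, your final step is a genuine gap: from ``$\dot g \ge \epsilon/2$ in a neighborhood of $x^*$, so the trajectory leaves that neighborhood in bounded time, so it cannot linger'' you conclude that $\phi(t_n,\xi) \not\to x^*$. That does not follow. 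An $\omega$-limit point only requires the trajectory to visit every neighborhood of $x^*$ along some sequence $t_n \to \infty$; it is perfectly consistent with the trajectory entering and leaving that neighborhood infinitely often. Moreover, the trajectory can in principle enter the neighborhood through points where $g$ is already tiny (points near the face $\{x_i = 0,\ X_i \in W\}$ but at distance comparable to the radius from $x^*$), so positivity of $\dot g$ alone does not force $g$ to be large when the trajectory is near $x^*$.

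What is missing is a quantitative lower bound on $g$ (or on $x_j$) at the moments the trajectory is \emph{deep} inside the neighborhood, and this is exactly what the paper supplies with its annulus argument: since $f$ is bounded by some $M$ near $x^*$, any passage from $\partial B_{\epsilon}(x^*)$ to $\partial B_{\epsilon/2}(x^*)$ takes time at least $\epsilon/(2M)$, during which $x_j' > k$, so upon entering $B_{\epsilon/2}(x^*)$ one has $x_j \ge k\epsilon/(2M)$, and $x_j$ keeps increasing while inside $B_{\epsilon/2}(x^*)$; hence $x_j$ is bounded below by a positive constant whenever the trajectory is within $\epsilon/2$ of $x^*$, contradicting $x_j(t_n) \to x^*_j = 0$. (Equivalently, you could run your $g$ backwards in time from $x(t_n)$: inside the ball $g$ decreases backwards at rate at least $\epsilon/2$ and stays nonnegative, so the backward time spent in the ball is at most $2g(x(t_n))/\epsilon \to 0$, while reaching $x(t_n)$ from the ball's boundary takes time bounded below by the speed bound --- contradiction.) Without some such step combining the drift bound with a speed bound, the argument does not close.
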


\begin{proof}
  Suppose, in order to find a contradiction, that there is a $\xi \in
  \R^{m}_{>0}$ and a subset of the species, $W$, such that
  $\omega(\xi) \cap L_W \ne \emptyset$ and $W$ is not a semi-locking
  set. Let $y \in \omega(\xi) \cap L_W$.  We note that there exists a
  species $X_j$, with $X_j \in W$, such that at least one input to
  $X_j$ (term in $f_j$ of equation \eqref{eq:main} with a positive
  coefficient) is non-zero if the concentrations are given by $y$, for
  otherwise $W$ would be a semi-locking set.  Therefore, because all
  outputs from species $X_j$ (terms in $f_j$ with a negative
  coefficient) are zero at $y$, there exists $\epsilon > 0$ and $k >
  0$ such that if $x(t) \in \R^{m}_{>0} \cap B_{\epsilon}(y)$, then
  \begin{equation}
    f_j(x(t)) = x_j'(t) > k,
    \label{jderiv}
  \end{equation}
  where $B_{\epsilon}(y) = \{x : |x - y| < \epsilon\}$. 

  Because $f(\cdot)$ is $C^1$, we have $\|f\|_{\infty,loc} < M$ for
  some $M>0$, and this bound is valid in $\R^{m}_{>0} \cap
  B_{\epsilon}(y)$.  Therefore, for any $0 < a < b$, if $x(t) \in
  \R^{m}_{>0} \cap B_{\epsilon}(y)$ for $t \in (a,b)$, we have that
  \begin{equation}
    |x(b) - x(a)| = \left| \int_a^b f(x(s))ds \right| \le (b-a)M.
    \label{epsbound}
  \end{equation}
  Now consider a partial trajectory starting on the boundary of
  $\R^{m}_{>0} \cap B_{\epsilon}(y)$ at time $t_{\epsilon}$, ending on
  the boundary of $\R^{m}_{>0} \cap B_{\epsilon/2}(y)$ at time
  $t_{\epsilon/2}$, and remaining within that annulus for all time in
  $(t_{\epsilon},t_{\epsilon/2})$.  Note that one such partial
  trajectory must exist every time we enter $\R^{m}_{>0} \cap
  B_{\epsilon/2}(y)$, and this happens at least once by our assumption
  that $y \in \omega(\xi) \cap L_W$.  By \eqref{epsbound},
  $t_{\epsilon/2} - t_{\epsilon} \ge \epsilon/(2M)$.  On the other
  hand, by \eqref{jderiv}, $x_j'(t) > k$ for $t \in
  (t_{\epsilon},t_{\epsilon/2})$.  Therefore,
  \begin{align*}
    x_j(t_{\epsilon/2}) &= x_j(t_{\epsilon}) +
    \int_{t_{\epsilon}}^{t_{\epsilon/2}}x_j'(s)ds\\
    &\ge x_j(t_{\epsilon}) + \epsilon k/(2M)\\
    &\ge \epsilon k /(2M).
  \end{align*}
  Combining the above with the fact that we still have $x_j'(t) > k$
  on $\R^{m}_{>0} \cap B_{\epsilon/2}(y)$, and we see that there can
  not exist times $t_n$ such that $x(t_n) \to y$, as $n \to \infty$.
  This is a contradiction and completes the proof.
\end{proof}

\begin{remark}
  Theorem \ref{thm:omegapoints} is a powerful tool for understanding
  the dynamics of chemical reaction systems.  We see that in order to
  prove that a chemical system is persistent, it is sufficient to show
  that $[(c + S) \cap \R^m_{\ge 0}] \cap L_W = \emptyset$ for all $c
  \in \R^m_{>0}$ and all semi-locking sets $W$.  We will show in Lemma
  \ref{thm:noequi} that for many reaction systems such a condition is
  equivalent to having no equilibria on the boundaries of the positive
  stoichiometric compatibility classes.
\end{remark}

\subsection{Deficiency and the Deficiency Zero Theorem}

We require one more definition before we can state precisely the types
of systems we consider in this paper.

\begin{definition}
  The {\em deficiency}, $\delta$, of a reaction network $\{\S,\C,\Re
  \}$ is given by $\delta = n - l - s$, where $n$ is the number of
  complexes of the system, $l$ is the number of linkage classes, and
  $s = $dim $S$, the dimension of the stoichiometric subspace.
\end{definition}

\begin{remark}
  It has been shown that the deficiency of a reaction network is a
  non-negative number.  In fact, the deficiency is the dimension of a
  certain subspace associated with the system.  See \cite{Feinberg87},
  \cite{Feinberg95a}, and \cite{FeinbergLec79} for details.
\end{remark}

The main types of systems considered in this paper are those with mass
action kinetics, are weakly reversible, and have a deficiency of zero.
The following theorem by Feinberg (\cite{FeinbergLec79, Feinberg95a})
is the catalyst for studying such systems.  The proof can be found in
\cite{FeinbergLec79} or \cite{Feinberg95a}.

\begin{theorem}[The Deficiency Zero Theorem]
  Consider a system of the form \eqref{eq:main} with mass action
  kinetics that is weakly reversible and has a deficiency of zero.
  Then, within each positive stoichiometric compatibility class there
  is precisely one equilibrium value and it is locally asymptotically
  stable relative to its compatibility class.
  \label{thm:def0}
\end{theorem}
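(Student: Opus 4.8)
The Deficiency Zero Theorem is classical, being due to Horn, Jackson, and Feinberg, so I only sketch the architecture one would follow. The plan is to exploit the graph-theoretic structure of the set of complexes. Under mass action \eqref{eq:massaction} one rewrites the right-hand side of \eqref{eq:main} as $f(x) = Y A_\kappa \Psi(x)$, where the columns of $Y$ are the complexes, $\Psi(x) \in \R^n_{>0}$ has components $x^y \doteq x_1^{y_1}\cdots x_m^{y_m}$, and $A_\kappa$ is the negative weighted Laplacian of the directed reaction graph, whose off-diagonal entries record the rate constants $k_{y\to y'}$. Call $\bar x \in \R^m_{>0}$ \emph{complex balanced} if $A_\kappa \Psi(\bar x) = 0$; every such point is an equilibrium, and the first objective is to produce one. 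Because the network is weakly reversible, each linkage class is a strongly connected component of the graph, so by the Matrix--Tree Theorem (or Perron--Frobenius) $\ker A_\kappa$ is $l$-dimensional and is spanned by vectors $v^{(1)},\dots,v^{(l)}$, where $v^{(j)}$ is supported on the $j$th linkage class and is strictly positive there.

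One then needs $\bar x$ with $\Psi(\bar x) \in \ker A_\kappa$, i.e.\ with $\Psi(\bar x)$ proportional, linkage class by linkage class, to the $v^{(j)}$. Taking logarithms, this asks that $Y^T \ln \bar x$ differ from the fixed vector $(\ln v^{(1)},\dots,\ln v^{(l)})$ by a vector that is constant on each linkage class; equivalently, that a fixed element of $\R^n$ lie in $\mathrm{Im}(Y^T)$ plus the span of the linkage-class indicator vectors. A dimension count identifies the codimension of that subspace with $\delta = n - l - s$, so the hypothesis $\delta = 0$ is exactly what makes the system solvable, and since $\ln$ is a bijection of $\R^m_{>0}$ onto $\R^m$ this produces $\bar x$. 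Moreover the set of all complex balanced equilibria then equals $E = \{x \in \R^m_{>0} : \ln x - \ln\bar x \in S^\perp\}$, by the same log-linear bookkeeping. Using that a weakly reversible deficiency-zero network has the property that \emph{every} positive equilibrium is complex balanced, the equilibria in a positive stoichiometric compatibility class $(\bar x + S)\cap\R^m_{>0}$ are exactly $E \cap (\bar x + S)$; and if $x$ lies in this set then, writing $\mu = \ln x - \ln \bar x \in S^\perp$ and noting $x - \bar x \in S$, we get $0 = \mu\cdot(x-\bar x) = \sum_i (\ln x_i - \ln\bar x_i)(x_i - \bar x_i) \ge 0$, with equality only at $x = \bar x$. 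Hence each positive compatibility class contains precisely one equilibrium.

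For local asymptotic stability relative to the class, the function $V(\cdot,\bar x)$ of \eqref{eq:lyapunov} does the work: it has diagonal, positive-definite Hessian $\mathrm{diag}(1/x_i)$, so it is strictly convex on $\R^m_{>0}$, and since $\nabla V(\bar x) = 0$ it is nonnegative on $(\bar x + S)\cap\R^m_{>0}$ and vanishes only at $\bar x$. Writing $\mu_i = \ln x_i - \ln\bar x_i$ one computes, along a trajectory,
\begin{equation*}
  \dot V = \nabla V(x)\cdot f(x) = \sum_{y\to y'} k_{y\to y'}\, x^y\,\mu\cdot(y'-y) = \sum_{y\to y'} k_{y\to y'}\,\bar x^y\,\psi_y \ln\frac{\psi_{y'}}{\psi_y},
\end{equation*}
where $\psi_z \doteq x^z/\bar x^z$. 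The inequality $\ln t \le t - 1$ gives $\psi_y\ln(\psi_{y'}/\psi_y) \le \psi_{y'} - \psi_y$, and the complex balance relation $A_\kappa\Psi(\bar x) = 0$ rearranges $\sum_{y\to y'} k_{y\to y'}\bar x^y(\psi_{y'}-\psi_y)$ to $0$; hence $\dot V \le 0$, with equality iff $\psi_y = \psi_{y'}$ along every reaction, i.e.\ (by weak reversibility) iff $\psi$ is constant on each linkage class. That condition says $\mu\cdot(y'-y) = 0$ for every reaction, so $\mu \in S^\perp$; combined with $x - \bar x \in S$ the same $\langle \ln x - \ln \bar x,\, x - \bar x\rangle \ge 0$ argument forces $x = \bar x$. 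So $\dot V < 0$ off $\bar x$, and since the relative sublevel sets of $V$ near $\bar x$ are compact, Lyapunov's theorem (applied on the forward-invariant manifold $(\bar x + S)\cap\R^m_{>0}$) gives local asymptotic stability relative to the class.

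The main obstacle is the existence step: converting ``deficiency zero'' into the solvability of the log-linear system for $\bar x$, which requires both the identification of $\delta$ with the relevant codimension and the Matrix--Tree/Perron--Frobenius input pinning down $\ker A_\kappa$ for a weakly reversible graph. By contrast the Lyapunov estimate, though algebraically fussy to set up, collapses onto the single scalar inequality $\ln t \le t-1$ once the complex balance relation is used, and the uniqueness claim is just the $\langle \ln x - \ln\bar x,\, x - \bar x\rangle \ge 0$ observation.
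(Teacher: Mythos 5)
Your sketch is the classical Horn--Jackson/Feinberg argument; note that the paper itself does not prove this theorem (it explicitly defers to \cite{FeinbergLec79, Feinberg95a}), so the comparison is with that standard proof, and most of your outline matches it: the formulation $f(x)=YA_\kappa\Psi(x)$, the Perron--Frobenius/Matrix--Tree description of $\ker A_\kappa$ for a weakly reversible graph, the identification of $\delta$ with the codimension obstructing the log-linear system for a complex balanced $\bar x$, the description $E=\{x\in\R^m_{>0}:\ln x-\ln\bar x\in S^{\perp}\}$, and the Lyapunov computation with $\ln t\le t-1$ plus complex balancing, closed by the $\langle \ln x-\ln\bar x,\,x-\bar x\rangle\ge 0$ argument. (The assertion that at deficiency zero every positive equilibrium is complex balanced, which you invoke without justification, is a one-liner: $A_\kappa\Psi(x)$ lies in the span $U$ of the reaction vectors viewed in complex space, and $\delta=\dim(\ker Y\cap U)=0$ forces $A_\kappa\Psi(x)=0$; so that omission is minor.)

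There is, however, one genuine gap: the theorem asserts that \emph{each} positive stoichiometric compatibility class contains an equilibrium, and your argument only produces the single point $\bar x$ (hence the set $E$) and then proves that every class contains \emph{at most} one point of $E$. Nowhere do you show that $E\cap(c+S)\cap\R^m_{>0}\neq\emptyset$ for every $c\in\R^m_{>0}$. This existence-in-every-class step is not bookkeeping; in the classical treatments it is a separate substantive lemma (Horn and Jackson's Lemma 4B, or equivalently a Birch-type theorem), proved e.g.\ by minimizing the strictly convex function $V(\cdot,\bar x)$ over the closure of a compatibility class and showing the minimizer cannot sit on the boundary (using that $\partial V/\partial x_i=\ln x_i-\ln\bar x_i\to-\infty$ as $x_i\to 0$), or by a degree/fixed-point argument for the log-linear parametrization of $E$. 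Without it you have uniqueness and local asymptotic stability of whatever equilibrium a class happens to contain, but not the full statement. Adding that variational (or Birch) argument would complete the sketch.
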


In order to prove that the systems considered in the Deficiency Zero
Theorem are locally asymptotically stable, the Lyapunov function
\eqref{eq:lyapunov} is used. It is shown that for $x \in (\bar x + S)
\cap \R^m_{>0}$ (where $\bar x$ is the equilibrium guaranteed to exist
by Theorem \ref{thm:def0}), $V(x) \ge 0$ with equality if and only if
$x = \bar x$, and that $dV(x(t))/dt < 0$ for all trajectories with
initial condition in $(\bar x + S) \cap \R^m_{>0}$.  We will make use
of these facts throughout the paper without reference, however we
point the interested reader to the original works for details.

\subsection{Boundary equilibria}

The following Lemma shows that having no equilibria on the boundaries
of the positive stoichiometric compatibility classes is equivalent to
$[(c + S) \cap \R^m_{\ge 0}] \cap L_W = \emptyset$ for all $c \in
\R^m_{>0}$ and semi-locking sets $W$.  Following Lemma
\ref{thm:noequi} we present a theorem pertaining to any system
equipped with a globally defined Lyapunov function that does not
necessarily go to infinity as $x$ goes to the boundary of the domain.
We then use these results in combination with Theorem
\ref{thm:omegapoints} to conclude that for weakly reversible,
deficiency zero systems with mass action kinetics, having no
equilibria on the boundary of the positive stoichiometric
compatibility classes implies global stability of the equilibrium
values given by Theorem \ref{thm:def0}.  We again note that it is
already known that global asymptotic stability follows from a lack of
boundary equilibria.  For example, in \cite{Sontag2001} Sontag showed
that all trajectories must converge to the set of equilibria, and so a
lack of boundary equilibria implies convergence to the unique
equilibrium in the interior of the positive stoichiometric
compatibility class.  We rederive this result here because our methods
put it in a larger context in which global stability is understood
through the intersections given in equation
\eqref{eq:emptyintersection} and because it makes clear how our
results in Section \ref{sec:results} are truly a generalization of
this fact.

\begin{lemma}
  For any chemical reaction system, the set of equilibria on the
  boundaries of the positive stoichiometric compatibility classes is
  contained in $\bigcup_{c}\bigcup_W [(c + S) \cap \R^m_{\ge 0}] \cap
  L_W$, where the first union is over $c \in \R^m_{>0}$ and the second
  union is over the semi-locking sets.  Further, if there are no
  equilibria on the boundaries of the positive stoichiometric
  compatibility classes for a weakly reversible, deficiency zero
  system with mass action kinetics, then $[(c + S) \cap \R^m_{\ge 0}]
  \cap L_W = \emptyset$ for all $c \in \R^m_{>0}$ and semi-locking
  sets $W$.
  \label{thm:noequi}
\end{lemma}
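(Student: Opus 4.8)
The lemma has two parts. The first part -- that every boundary equilibrium lies in $\bigcup_c \bigcup_W [(c+S)\cap\R^m_{\ge 0}]\cap L_W$ -- holds for arbitrary kinetics and is essentially a bookkeeping statement. The plan is: given an equilibrium $x^*$ on the boundary of some positive compatibility class, let $c\in\R^m_{>0}$ be any interior point with $x^* \in c+S$ (such a $c$ exists because $x^*$ lies on the boundary of $(c+S)\cap\R^m_{\ge 0}$ for some positive $c$, by definition of ``boundary equilibrium''). Let $W = \{X_i : x^*_i = 0\}$, which is nonempty since $x^*$ is on $\partial\R^m_{>0}$; then $x^* \in [(c+S)\cap\R^m_{\ge 0}]\cap L_W$ by construction. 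It remains to show $W$ is a semi-locking set. For this, use that $f(x^*)=0$. Suppose $W$ were not semi-locking: then some reaction $y\to y'$ has an element of $W$ in $y'$ but none in $y$. Since $x^*_i > 0$ for all $X_i \notin W$, the reactant complex $y$ is supported on species with positive concentration, so $R_{y\to y'}(x^*) > 0$ by the monotonicity/positivity assumptions on the kinetics. Now pick a species $X_j \in W$ that appears in $y'$ but not in $y$ (if every $W$-species in $y'$ also appeared in $y$, that reaction wouldn't witness a violation; so there is such a reaction and such a $j$). Then in $f_j(x^*)$ the term from $y\to y'$ is $R_{y\to y'}(x^*)(y'_j - y_j) = R_{y\to y'}(x^*)\, y'_j > 0$. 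To conclude $f_j(x^*) \ne 0$ I must rule out cancellation from other reactions. The clean way is to note that every reaction $\tilde y \to \tilde y'$ with $\tilde y_j > 0$ has $R_{\tilde y\to\tilde y'}(x^*) = 0$ (since $x^*_j = 0$), so the only nonzero contributions to $f_j(x^*)$ come from reactions with $\tilde y_j = 0$, i.e. reactions that can only \emph{increase} $x_j$; hence $f_j(x^*) \ge R_{y\to y'}(x^*) y'_j > 0$, contradicting $f(x^*) = 0$. This argument is structurally identical to the one used in the proof of Theorem \ref{thm:omegapoints}, and I would phrase it to mirror that proof.

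For the second part, the claim is a \emph{converse-type} statement restricted to weakly reversible, deficiency zero, mass action systems: if there are no boundary equilibria, then $[(c+S)\cap\R^m_{\ge 0}]\cap L_W = \emptyset$ for all positive $c$ and all semi-locking $W$. So I need to show that for such systems, every point of $[(c+S)\cap\R^m_{\ge 0}]\cap L_W$ with $W$ semi-locking \emph{is} an equilibrium; then the hypothesis forces all these sets empty. The plan is to fix a semi-locking set $W$ and a point $x \in [(c+S)\cap\R^m_{\ge 0}]\cap L_W$. By weak reversibility, as noted in the discussion after Definition \ref{def:locking}, when the $W$-species vanish, every linkage class containing a complex that involves a $W$-species has \emph{all} of its reaction fluxes equal to zero; for the remaining linkage classes, all complexes are supported on non-$W$ species. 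Restrict attention to the subnetwork consisting of the ``active'' linkage classes (those with no $W$-species). This subnetwork is again weakly reversible, and -- this is the key structural input -- it has deficiency zero: deleting entire linkage classes from a weakly reversible deficiency zero network yields a weakly reversible deficiency zero network (this follows from the fact that for weakly reversible networks deficiency zero is equivalent to the stoichiometric subspace of each linkage class being ``independent'' / the ranks adding up, a fact I would cite from \cite{FeinbergLec79}). Apply the Deficiency Zero Theorem (Theorem \ref{thm:def0}) to the restricted network viewed as living on the non-$W$ coordinates: its positive compatibility classes each contain an equilibrium. I then argue that $x$, whose non-$W$ coordinates are strictly positive and whose $W$-coordinates are zero, must have its non-$W$-part sitting at such an equilibrium of the subnetwork -- giving $f(x) = 0$ in the full system (the active-linkage-class fluxes vanish by the subnetwork equilibrium condition on the non-$W$ coordinates, and the inactive ones vanish because $W$-species are zero). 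Hence $x$ is a boundary equilibrium, contradiction.

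The main obstacle is the second part, specifically justifying that the restricted (active-linkage-class) subnetwork inherits \emph{deficiency zero} and then correctly transferring an equilibrium of that subnetwork back to a boundary equilibrium of the original system. One has to be careful that a positive equilibrium of the subnetwork exists in the \emph{right} compatibility class -- i.e. that the projection of $x$ onto the non-$W$ coordinates actually lies in a positive compatibility class of the subnetwork (its coordinates are positive, which is what is needed), and that the subnetwork's stoichiometric subspace is compatible with the ambient $S$ so that the resulting point genuinely lies in $(c+S)$. A subtle point: I only need \emph{existence} of a boundary equilibrium somewhere to contradict the hypothesis ``no boundary equilibria,'' so I do not actually need $x$ itself to be an equilibrium -- it suffices to produce \emph{some} point of $L_W$ on a positive compatibility class that is an equilibrium, which the Deficiency Zero Theorem applied to the subnetwork hands me directly. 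That observation simplifies the argument and sidesteps the compatibility-class matching for $x$ specifically. I would also remark that for the weakly reversible one-linkage-class case the whole discussion collapses since semi-locking equals locking and the ``locked'' state is trivially an equilibrium.
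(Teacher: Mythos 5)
Your proposal is correct and takes essentially the same route as the paper: for the first part you read off $W$ as the set of species vanishing at the boundary equilibrium and verify the semi-locking property directly from $f(x^*)=0$ (noting that every reaction whose reactant involves a zero species has zero flux, so no cancellation can occur), which is the paper's argument spelled out in more detail; for the second part you pass to the subnetwork of linkage classes not ``locked'' by $W$, observe it is again weakly reversible with deficiency zero, apply the Deficiency Zero Theorem to obtain a positive equilibrium $\bar z$, and lift it by zeros to produce a boundary equilibrium, exactly the paper's construction (the paper cites Feinberg for the deficiency-zero inheritance that you justify via independence of the linkage-class subspaces). One caution on your closing remark: do not actually \emph{sidestep} the compatibility-class matching --- keep the choice described in your ``main obstacle'' paragraph, taking $\bar z$ in the subnetwork compatibility class of the projection of $x$, so that $(\bar z,\vec 0)\in x+S=c+S$ with $c\in\R^m_{>0}$; without that matching (or an additional support/separation argument) the Deficiency Zero Theorem alone does not immediately give that the lifted point lies on the boundary of a \emph{positive} stoichiometric compatibility class, which is what the contradiction with ``no boundary equilibria'' requires, and also remember the degenerate case $W=\S$, where the origin itself is the required equilibrium.
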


\begin{proof}
  Let $y$ be an equilibrium on the boundary of a positive
  stoichiometric compatibility class.  Let $W$ be the set of species
  with a concentration of zero at $y$.  Because each complex that
  contains an element of $W$ is providing zero flux, in order for $y$
  to be an equilibrium value each reaction in which there is an
  element of $W$ in the product complex must have an element of $W$ in
  the reactant complex.  Thus, $W$ is a semi-locking set and $y \in
  [(c + S) \cap \R^m_{\ge 0}] \cap L_W$, for some $c \in \R^m_{> 0}$.

  In order to prove the second part of the Lemma, we suppose $W$ is a
  semi-locking set for the system and suppose $y \in [(c + S) \cap
  \R^m_{\ge 0}] \cap L_W$ for some $c \in \R^m_{>0}$. We will now
  produce an equilibrium value on the boundary of $(c + S) \cap
  \R^m_{>0}$.  If $W = \mathcal{S}$, $y = \vec 0$, and, because $W$ is
  a semi-locking set, $y$ is an equilibrium.  Otherwise, consider the
  system consisting only of those species not in the semi-locking set
  $W$.  By the arguments in \cite{Feinberg95a}, the linkage classes
  not ``locked'' by $W$ form their own weakly reversible, deficiency
  zero system.  Therefore, there is an equilibrium for that reduced
  system, $\bar z$.  Let $\bar y = (\bar z, \vec{0})$ (where we have
  potentially rearranged the ordering of the species so that those not
  in the semi-locking set came first).  $\bar y$ is a boundary
  equilibrium value to our original system.  Therefore, the result is
  shown.
\end{proof}

\begin{theorem}
  Let $x(t) = x(t,x(0))$ be the solution to $\dot x = f(x)$ with
  initial condition $x(0)$, where $f$ is $C^1$ and the domain of
  definition of the system is the open set $C \subset \R^m$.  Let
  $\bar x \in C$ be the unique equilibrium value to the system.
  Finally, suppose that there is a globally defined Lyapunov function
  $V$ that satisfies:
  \begin{enumerate}
  \item $V(x) \ge 0$ with equality if and only if $x = \bar x$.
  \item $dV(x(t))/dt \le 0$ with equality if and only if $x(t) = \bar
    x$.
    \item $V(x) \to \infty$, as $|x| \to \infty$.
  \end{enumerate}
  Then either $x(t) \to \bar x$ or $x(t) \to \partial C$.
  \label{thm:converge}
\end{theorem}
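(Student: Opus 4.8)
The plan is to run a LaSalle-type invariance argument in which hypothesis~3 confines the trajectory and hypothesis~2 pins down its $\omega$-limit set. First I would fix the maximal forward solution $x(t)$, $t \in [0,T)$, with $x(0) \in C$, and set $\alpha = V(x(0))$. Since $dV(x(t))/dt \le 0$, the orbit never leaves the sublevel set $K = \{x \in C : V(x) \le \alpha\}$, which is bounded by hypothesis~3; hence the orbit is bounded, and $V(x(t))$, being non-increasing and (by hypothesis~1) bounded below by $0$, decreases to a limit $c^\ast \ge 0$. The conclusion then comes from a clean fork based on where the orbit's closure $\Gamma$ lies relative to $\overline{C} = C \cup \partial C$.

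If $\Gamma \subset C$, then $\Gamma$ is a compact subset of the open set $C$, so by the standard escape lemma the solution can neither blow up nor exit $C$ in finite time, forcing $T = \infty$. Then $\omega(x(0))$ is a non-empty, compact, invariant subset of $C$, and by continuity of $V$ (using $V(x(t)) \downarrow c^\ast$) we have $V \equiv c^\ast$ on $\omega(x(0))$. If some $p \in \omega(x(0))$ differed from $\bar x$, then by hypothesis~2 the derivative of $t \mapsto V(\phi(t,p))$ at $t = 0$ would be strictly negative, so $V(\phi(\delta,p)) < c^\ast$ for small $\delta > 0$; but $\phi(\delta,p) \in \omega(x(0))$ by invariance, contradicting $V \equiv c^\ast$ on $\omega(x(0))$. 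Hence $\omega(x(0)) = \{\bar x\}$, and a bounded orbit with a single $\omega$-limit point converges to it, so $x(t) \to \bar x$.

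If instead $\Gamma \not\subset C$, then since $\Gamma$ is compact in $\R^m$ and $\Gamma \subset \overline{C}$, the orbit's closure meets $\partial C$: when $T = \infty$ this says $\omega(x(0)) \cap \partial C \ne \emptyset$, and when $T < \infty$ it forces $\mathrm{dist}(x(t),\partial C) \to 0$ as $t \to T$ (a bounded maximal orbit on a finite interval must exit $C$). Either way this is the alternative ``$x(t) \to \partial C$''. The two cases are complementary by construction, and also mutually exclusive: if $x(t) \to \bar x \in C$, then $\Gamma = \{x(t) : t \ge 0\} \cup \{\bar x\}$ is a compact subset of $C$. This is precisely the dichotomy that later gets combined with Theorem~\ref{thm:omegapoints} to rule out the boundary case and conclude genuine global stability.

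I do not expect a deep obstacle — the result is a repackaging of LaSalle's theorem — but the step that needs care is the handoff at $\partial C$: since $f$ and $V$ are defined and controlled only on the open set $C$, no invariance or Lyapunov reasoning survives once the orbit accumulates on $\partial C$, which is exactly why the boundary alternative must appear in the statement. The one genuinely load-bearing observation is that hypothesis~3 upgrades ``$V$ non-increasing along the orbit'' to ``the orbit is bounded'' (hence ``$\Gamma$ compact''), reducing everything to the fork above: $\Gamma \subset C$, where LaSalle gives $x(t) \to \bar x$, versus $\Gamma$ touching $\partial C$, where $x(t) \to \partial C$.
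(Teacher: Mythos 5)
Your LaSalle route is fine in its first branch, but there is a genuine gap in the second. When $T=\infty$ and the orbit closure $\Gamma$ meets $\partial C$, you conclude only that $\omega(x(0))\cap\partial C\neq\emptyset$ and then declare this to be the alternative ``$x(t)\to\partial C$.'' Those are not the same statement: the theorem (and the paper's proof of it) asserts $d(x(t),\partial C)\to 0$, i.e.\ \emph{every} $\omega$-limit point lies on $\partial C$. Your case split as written does not exclude exactly the mixed behaviour the dichotomy is meant to rule out --- a trajectory that accumulates on the boundary along one sequence of times while returning infinitely often to a compact subset of $C$ away from $\bar x$. Such a trajectory falls into your second case and satisfies your stated conclusion, yet it satisfies neither alternative of the theorem; excluding it requires invoking hypothesis 2 at interior limit points and the Lyapunov stability of $\bar x$, neither of which appears in your second branch.

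The gap is fillable with tools you already set up: for $q\in\omega(x(0))\cap C$ the invariance argument of your first branch applies verbatim (the solution through $q$ exists for small times and remains in $\omega$, and $V\equiv c^\ast$ on $\omega\cap C$ by continuity of $V$ on $C$), so $\omega\cap C\subset\{\bar x\}$; and if $\bar x\in\omega$, Lyapunov stability of $\bar x$ (from hypotheses 1--2) forces $x(t)\to\bar x$, contradicting $\Gamma\not\subset C$. Hence in case 2 one actually has $\omega\subset\partial C$, and boundedness of the orbit gives $d(x(t),\partial C)\le d(x(t),\omega)\to 0$. For comparison, the paper bypasses the $\omega$-limit/invariance machinery altogether: assuming $x(t)\nrightarrow\bar x$, it notes the orbit stays a distance $\rho$ from $\bar x$, obtains $\nabla V(x)\cdot f(x)<-\eta(\epsilon)$ on the compact set $C_\epsilon$ of points at distance at least $\epsilon$ from $\partial C$ and at least $\rho$ from $\bar x$, and concludes the total time spent in $C_\epsilon$ is at most $V(x(0))/\eta$, which yields $d(x(t),\partial C)\to 0$ directly and quantitatively. (Your weaker conclusion would in fact suffice for the later applications, which only use the existence of a boundary $\omega$-limit point, but it does not prove the theorem as stated.)
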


\begin{proof}
  Suppose that $x(t) \nrightarrow \bar x$.  Because $V(\cdot)$
  decreases along trajectories, the value $V(x(t))$ is bounded above
  by $V(x(0))$ for all $t > 0$.  Therefore, because $V(x) \to \infty$
  as $|x| \to \infty$, $x(t)$ remains bounded for all $t > 0$. Also,
  the local asymptotic stability of $\bar x$ combined with the fact
  that $x(t) \nrightarrow \bar x$ implies there is a $\rho >0$ such
  that $|x(t) - \bar x|>\rho$ for all $t >0$.

  Let $\epsilon > 0$ and for $x \in C$ let $d(x,\partial C)$ represent
  the distance from $x$ to the boundary of $C$.  Let $C_{\epsilon} =
  \{x \in C \ | \ d(x,\partial C) \ge \epsilon \hbox{ and } |x - \bar
  x| \ge \rho\}$.  Using that trajectories remain bounded for all time,
  we may use the continuity of the functions $\nabla V$ and $f$ to
  conclude that there is a positive number $\eta = \eta(\epsilon)$
  such that $\nabla V(x) \cdot f(x) < -\eta$ for all $x \in
  C_{\epsilon}$.  Therefore, the maximum amount of time that any
  trajectory can spend in the set $C_{\epsilon}$ is $V(x(0))/\eta$
  (for, otherwise, $x(t) \to \bar x$).  Because $\epsilon > 0$ was
  arbitrary we see that $x(t) \to \partial C$.
\end{proof}

\begin{corollary}
  If there are no equilibria on the boundaries of the positive
  stoichiometric compatibility classes for a weakly reversible
  deficiency zero system with mass action kinetics, then the unique
  positive equilibrium value within each positive stoichiometric
  compatibility class is globally asymptotically stable relative to
  its compatibility class.
  \label{cor:noequiglobal}
\end{corollary}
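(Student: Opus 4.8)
The plan is to combine the three results just established --- Lemma~\ref{thm:noequi}, Theorem~\ref{thm:omegapoints}, and Theorem~\ref{thm:converge} --- in the natural way, the bridge being a persistence statement. First I would invoke Lemma~\ref{thm:noequi}: since the system is weakly reversible, has deficiency zero, has mass action kinetics, and by hypothesis has no equilibria on the boundaries of the positive stoichiometric compatibility classes, we get $[(c + S) \cap \R^m_{\ge 0}] \cap L_W = \emptyset$ for every $c \in \R^m_{>0}$ and every semi-locking set $W$. Now fix $\xi \in \R^m_{>0}$; by Lemma~\ref{lemma:pos} the trajectory $\phi(t,\xi)$ stays in $\R^m_{>0}$, and as noted in the Introduction it stays in $\xi + S$, so $\omega(\xi) \subseteq (\xi + S) \cap \R^m_{\ge 0}$. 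If $\omega(\xi) \cap \partial \R^m_{>0} \ne \emptyset$, then $\omega(\xi) \cap L_W \ne \emptyset$ for some non-empty subset of the species $W$, and Theorem~\ref{thm:omegapoints} would force $W$ to be a semi-locking set, contradicting the emptiness above. Hence $\omega(\xi) \cap \partial \R^m_{>0} = \emptyset$: the system is persistent.

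Next I would fix a positive stoichiometric compatibility class and apply Theorem~\ref{thm:converge} to the restriction of \eqref{eq:main} to it. Let $\bar x$ be the unique equilibrium of this class provided by the Deficiency Zero Theorem (Theorem~\ref{thm:def0}), let $C = (\bar x + S) \cap \R^m_{>0}$, and let $V$ be the Lyapunov function \eqref{eq:lyapunov}. Conditions (1) and (2) of Theorem~\ref{thm:converge} are precisely the properties of $V$ recorded after Theorem~\ref{thm:def0}, so the only point to verify is condition (3), that $V(x) \to \infty$ as $|x| \to \infty$ within the class. This holds because each summand $v_i(x_i) = x_i(\ln(x_i) - \ln(\bar x_i) - 1) + \bar x_i$ is non-negative (Gibbs' inequality, with value $\bar x_i > 0$ at $x_i = 0$) and satisfies $v_i(x_i) \to \infty$ as $x_i \to \infty$, while $|x| \to \infty$ within the class forces $\max_i x_i \to \infty$, whence $V(x) \ge \max_i v_i(x_i) \to \infty$. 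Theorem~\ref{thm:converge} then yields that either $\phi(t,\xi) \to \bar x$ or $\phi(t,\xi) \to \partial C$.

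Finally I would rule out the second alternative using persistence. Since $V$ is non-increasing along the trajectory and $V \to \infty$ at infinity within the class, the trajectory is bounded, so $\omega(\xi)$ is non-empty, and $\omega(\xi) \subseteq \overline C \subseteq (\bar x + S) \cap \R^m_{\ge 0}$. Every point of $\partial C$ lies in $\overline C \setminus C$ and therefore has at least one vanishing coordinate, so it belongs to $\partial \R^m_{>0}$; consequently $\phi(t,\xi) \to \partial C$ would force $\omega(\xi) \subseteq \partial \R^m_{>0}$, contradicting persistence. Hence $\phi(t,\xi) \to \bar x$ for every initial condition $\xi$ in the class, and combined with the local asymptotic stability of $\bar x$ relative to its class (Theorem~\ref{thm:def0}) this is exactly global asymptotic stability of $\bar x$ relative to its compatibility class.

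As for the main obstacle, there is no deep difficulty: the corollary is essentially the bookkeeping that wires together the persistence result and the LaSalle-type convergence theorem. The one genuinely needed remark is that Theorem~\ref{thm:converge}, although stated for a domain open in $\R^m$, applies verbatim to a positive stoichiometric compatibility class once everything is read relative to the affine subspace $\bar x + S$ on which the flow lives --- the gradient $\nabla V$ being replaced by the derivative of $V$ along $S$ --- which is legitimate because $f$ maps $\bar x + S$ into its tangent space $S$, so the restricted system is a well-defined $C^1$ system on a set that is open in $\bar x + S$.
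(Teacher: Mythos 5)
Your proposal is correct and follows exactly the route the paper intends: the paper's proof is the one-line observation that the corollary is a direct consequence of Lemma~\ref{thm:noequi}, Theorem~\ref{thm:omegapoints}, and Theorem~\ref{thm:converge}, and your write-up simply supplies the bookkeeping (persistence as the bridge, verification of condition (3), and the restriction of Theorem~\ref{thm:converge} to the compatibility class) in the same way.
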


\begin{proof}
  This is a direct result of Theorems \ref{thm:omegapoints} and
  \ref{thm:converge} and Lemma \ref{thm:noequi}.
\end{proof}

\section{Main results}
\label{sec:results}

By Lemma \ref{thm:noequi}, we see that the no boundary equilibria
assumption for weakly reversible deficiency zero systems with mass
action kinetics is equivalent to the assumption that $[(c + S) \cap
\R^m_{\ge 0}] \cap L_W = \emptyset$, for all $c \in \R^m_{>0}$ and all
semi-locking sets $W$.  This then implies global stability by
Corollary \ref{cor:noequiglobal}.  We will extend these results by
proving that global stability holds if $[(c + S) \cap \R^m_{\ge 0}]
\cap L_W$ is empty {\em or} discrete for each $c \in \R^m_{>0}$ and
each semi-locking set $W$.  The following definition is necessary.

\begin{definition}
  For a vector $x \in \R^m$, the {\em support} of $x$, denoted
  supp$(x)$, is the subset of the species such that $X_i \in
  \hbox{supp}(x)$ if and only if $x_i \ne 0$.
\end{definition}

\begin{proposition}
  Let $\{\S,\C,\Re\}$ be a weakly reversible, deficiency zero, mass
  action chemical reaction system with dynamics given by equation
  \eqref{eq:main}.  Suppose that $y \in \omega(x(0))$ for some $x(0)
  \in \R^m_{>0}$.  Then there must exist a nonzero $z_0 \in S$ with
  supp$(z_0) \subset$ supp$(y)$.
  \label{prop:main}
\end{proposition}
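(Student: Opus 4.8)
The plan is to pass to the sub‑network supported on $\mathrm{supp}(y)$ and exploit the structure of weakly reversible deficiency‑zero systems there. Write $W := \S \setminus \mathrm{supp}(y)$. If $W = \emptyset$ then $y \in \R^m_{>0}$; since the network contains a genuine reaction we have $S \neq \{0\}$, and any nonzero element of $S$ serves as $z_0$. So assume $W \neq \emptyset$. Since $y \in \omega(x(0)) \cap L_W$, Theorem~\ref{thm:omegapoints} forces $W$ to be a semi‑locking set, hence $f_i \equiv 0$ on the face $F := \{x \in \R^m_{\ge 0} : x_i = 0 \text{ for all } X_i \in W\}$ for every $X_i \in W$; thus $F$ is forward invariant, and on its relative interior the flow coincides with that of the reduced network obtained by deleting every linkage class containing a complex that meets $W$. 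By the argument used in the proof of Lemma~\ref{thm:noequi} (following \cite{Feinberg95a}), this reduced network is again weakly reversible, of deficiency zero, with mass action kinetics, and its stoichiometric subspace $S'$ is contained in $S$ and consists of vectors supported in $\mathrm{supp}(y)$.

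First suppose $W$ is \emph{not} a locking set. Then some linkage class lies entirely off $W$; by weak reversibility it contains a reaction $a \to b$ with $a \neq b$, and $z_0 := b - a$ is a nonzero element of $S' \subseteq S$ with $\mathrm{supp}(z_0) \subseteq \S \setminus W = \mathrm{supp}(y)$, as required. (This step uses only that $W$ is semi‑locking but not locking.)

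The remaining — and essential — case is that $W$ is a locking set. Then every reactant complex meets $W$, hence every complex meets $W$ (each complex is a reactant, by weak reversibility), and every point of $F$ is an equilibrium; in particular $y$ itself is an equilibrium. If $S$ contains a nonzero vector $v$ with $v_i = 0$ for all $X_i \in W$, take $z_0 := v$ and we are done. Otherwise $S \cap \{v : v_i = 0\ \forall\, X_i \in W\} = \{0\}$, so $(x(0)+S) \cap F$ is a single point, necessarily $y$, and the conclusion of the proposition would fail for this $y$; we must therefore derive a contradiction from $y \in \omega(x(0))$. Here one argues that the relative interior of the face of a locking set is transversally repelling for the flow: for $x$ near $F$ with positive $W$‑coordinates, the linearization in the $W$‑directions of $x \mapsto (f_i(x))_{X_i \in W}$ is a Metzler matrix whose column sums are nonnegative (because every complex meets $W$, so the product of any reaction whose reactant carries a single $W$‑unit still carries at least one $W$‑unit), and a finer analysis — using the weak reversibility of the reactions that consume exactly one $W$‑unit, or equivalently tracking the Lyapunov function $V$ along trajectories approaching $F$ — shows that orbits starting in $\R^m_{>0}$ near $F$ are pushed away from $F$. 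Hence some neighbourhood of $y$ cannot be re‑entered by $\phi(\cdot,x(0))$, contradicting $y \in \omega(x(0))$.

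The routine ingredients are the reduction to the sub‑network on $\mathrm{supp}(y)$, an appeal to Theorem~\ref{thm:omegapoints}, and the remark that a single genuine reaction inside $\mathrm{supp}(y)$ already produces $z_0$ whenever $W$ fails to be a locking set. The one real obstacle is the locking‑set case: showing that a locking set cannot be asymptotically ``emptied'' by a trajectory with positive initial data — equivalently, that the relative interior of its face carries no $\omega$‑limit points — which is precisely where the weak reversibility of the network (and not merely the semi‑locking property of $W$) must be used in an essential way.
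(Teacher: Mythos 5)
Your reduction and case split are fine as far as they go: Theorem \ref{thm:omegapoints} does force $W=\S\setminus\mathrm{supp}(y)$ to be semi-locking, and in the case that $W$ is semi-locking but not locking your shortcut is correct and pleasantly simple — some reactant complex $a$ misses $W$, the semi-locking property forces its product $b$ to miss $W$ as well, and $z_0=b-a$ is a nonzero vector of $S$ supported in $\mathrm{supp}(y)$. The problem is that the locking-set case with $S\cap\{v: v_i=0\ \forall X_i\in W\}=\{0\}$, which you correctly identify as ``the one real obstacle,'' is precisely the entire content of the proposition, and your sketch does not prove it. The claim that the face $F$ is ``transversally repelling'' cannot be extracted from the linearization: in the paper's Example \ref{example2} ($2A\rightleftarrows A+B\rightleftarrows B+C$, $W=\{A,B\}$ a locking set) every eigenvalue of the linearization at the boundary equilibria is zero, so the Metzler/column-sum observation yields nothing — nonnegative column sums are compatible with a nilpotent (indeed zero) linear part, and the local dynamics are genuinely non-hyperbolic. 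The phrase ``a finer analysis \dots tracking the Lyapunov function $V$ along trajectories approaching $F$'' names the missing argument rather than supplying it; as written, the contradiction in the essential case is asserted, not derived.

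For comparison, the paper does not case-split at all and does not attempt a local repulsion statement. It works directly with the trajectory: taking a time $t_\epsilon$ at which the $W$-coordinates are below $\epsilon$ and a later time $T_\epsilon$ at which the trajectory has returned even closer to $y$, it applies the mean value theorem to each term of $V$ and uses $V(x(T_\epsilon))<V(x(t_\epsilon))$. Because $|\ln\tilde x_i-\ln\bar x_i|\ge|\ln\epsilon|-k_i$ for $i$ indexing $W$ while the corresponding factors for the other coordinates stay bounded, the displacements in the $W$-coordinates are forced to be $O\bigl(1/|\ln\epsilon|\bigr)$ times the largest displacement in the remaining coordinates; normalizing $\Delta x=x(T_\epsilon)-x(t_\epsilon)\in S$ by that largest displacement and letting $\epsilon\to0$ along a sequence produces, by compactness, a nonzero limit $z_0\in S$ whose $W$-components vanish. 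That quantitative use of the logarithmic blow-up of $\nabla V$ near the boundary is exactly what your locking-set case needs and lacks; if you carry it out you will find it also subsumes your non-locking case, so the case analysis becomes unnecessary.
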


\begin{proof}
  If $y \in \R^m_{>0}$, there is nothing to show.  Therefore, assume
  that $y$ is on the boundary of the positive stoichiometric
  compatibility class.  By Theorem \ref{thm:omegapoints}, there is a
  semi-locking set $W$ such that $y \in [(x(0) + S) \cap \R^m_{\ge 0}]
  \cap L_W$.

  Let $V(x) : \R^m_{>0} \to \R$, be given by equation
  \eqref{eq:lyapunov} and let
  \begin{equation*}
    V_i(x_i) = x_i(\ln(x_i) - \ln(\bar x_i) - 1) + \bar x_i,
  \end{equation*}
  so that $V(x) = \sum_{i=1}^m V_i(x_i)$.  Reordering the species if
  necessary, we suppose $W = \{X_1,\dots,X_d\}$.  Choose $\rho > 0$ so
  small that for each $i \le d$, $x_i < \rho \implies \ln(x_i) -
  \ln(\bar x_i) < 0$.  Let $\epsilon > 0$ satisfy $\epsilon < \rho$.
  Let $t_{\epsilon}$ be a time such that $x_i(t_{\epsilon}) \le
  \epsilon$ for all $i \le d$ and $|x_j(t_{\epsilon}) - y_j| <
  \epsilon$ for all $j \ge d+1$.  Let $T_{\epsilon} = \min\{t >
  t_{\epsilon} : |x_i(t) - y_i| \le x_i(t_{\epsilon})/2, \hbox{ for
    all } i \le m\}$.  We know such $t_{\epsilon}$ and $T_{\epsilon}$
  exist because $y$ is an $\omega$-limit point of the system.  Note
  that $T_{\epsilon} > t_{\epsilon}$ and that for each $i \le d$,
  $x_i(T_{\epsilon}) < x_i(t_{\epsilon})$.  We consider how $V(x(t))$
  changes from time $t_{\epsilon}$ to time $T_{\epsilon}$.  Applying
  the mean value theorem to each $V_i(\cdot)$ term gives
  \begin{align}
    V(x(T_{\epsilon})) - V(x(t_{\epsilon})) &= \sum_{i=1}^m
    V_i(x_i(T_{\epsilon})) - V_i(x_i(t_{\epsilon}))\\
    \begin{split}    
      &= \sum_{i=1}^d (\ln(\tilde x_i) - \ln(\bar x_i))
      (x_i(T_{\epsilon}) -
      x_i(t_{\epsilon}))\\
      & \ \ \ + \sum_{i=d+1}^m (\ln(\tilde x_i) - \ln(\bar x_i))
      (x_i(T_{\epsilon}) - x_i(t_{\epsilon}))
    \end{split}
    \label{eq:double}
  \end{align}
  for some $\tilde x_i \in [x_i(T_{\epsilon}),x_i(t_{\epsilon})]$.
  Recalling that $V$ decreases along trajectories of $x(t)$ by Theorem
  \ref{thm:def0}, we have $V(x(T_{\epsilon})) - V(x(t_{\epsilon})) <
  0$.  Note that because for $j \ge d+1$, we have $|\tilde x_j - y_j|
  < \epsilon$, there are positive constants $c_j$ such that $c_j >
  |\ln(\tilde x_j) - \ln \bar x_j|$, and that bound is valid {\em for
    any} $\epsilon < \rho$. Let $C = \sum_{j=d+1}^m c_j$.

  By our choices above, we know that for each $i \in \{1,\dots,d\}$
  the following inequalities hold:
  \begin{enumerate}
  \item $\ln(\tilde x_i) - \ln (\bar x_i) < 0$.
  \item  $x_i(T_{\epsilon}) - x_i(t_{\epsilon}) <0$.
  \end{enumerate}
  Therefore, each piece of the first sum in equation \eqref{eq:double}
  is strictly positive.  Thus, to ensure that $V$ is decreasing along
  this trajectory, the second sum in equation \eqref{eq:double} must
  be negative and, letting $\Delta x_i = x_i(T_{\epsilon}) -
  x_i(t_{\epsilon})$ for each $i$, we have
  \begin{align}
    \begin{split}
      \sum_{i=1}^d \left( \ln(\tilde x_i) - \ln (\bar x_i) \right) \Delta x_i 
      &< \left| \sum_{j=d+1}^m \left( \ln(\tilde x_j) - \ln (\bar x_j) \right) 
        \Delta x_j \right| \\
      &\le \sum_{j=d+1}^m c_j |\Delta x_j|.
    \end{split}
    \label{eq:firstbound}
  \end{align}
  In fact, because each term on the left hand side of equation
  \eqref{eq:firstbound} is positive, a similar inequality must hold
  for each $i = 1,\dots,d$.  That is, for $i \le d$
  \begin{align*}
    (\ln(\tilde x_i) - \ln (\bar x_i)) \Delta x_i &\le \sum_{j=d+1}^m
    c_j |\Delta x_j|.
  \end{align*}
  For each $i \le d$, $\tilde x_i \in
  [x_i(T_{\epsilon}),x_i(t_{\epsilon})]$ and
  $x_i(T_{\epsilon}),x_i(t_{\epsilon}) < \epsilon$. Hence, letting
  $|\ln \bar x_i| = k_i$ we have that for each $i \le d$
  \begin{equation*}
    | \ln(\tilde x_i)  - \ln (\bar x_i) | \ge |\ln \epsilon| - k_i.
  \end{equation*}
  Thus, for each $i = 1,\dots,d$,
  \begin{equation*}
    |\Delta x_i| \le \frac{1}{|\ln \epsilon| - k_i} \sum_{j=d+1}^m c_j
    |\Delta x_j|. 
  \end{equation*}
  Let $\Delta_{max} = \sup_{j \in \{d+1,\dots,m\}} \{|\Delta x_j|\}$
  and $\delta(\epsilon) = \sup_{i \in \{1,\dots,d\}} (|\ln \epsilon| -
  k_i)^{-1}$.  We know $\Delta_{max} \ne 0$ because if it were equal
  to zero, then the right hand side of equation \eqref{eq:firstbound}
  would be zero, which it can not be as it is strictly larger than the
  left hand side.  Combining the above shows that for each $i =
  1,\dots,d$,
  \begin{equation*}
    |\Delta x_i| \le \delta(\epsilon) C \Delta_{max}.
  \end{equation*}
  Now we consider the vector $\Delta x = x(T_{\epsilon}) -
  x(t_{\epsilon}) \in S$.  Normalizing the vector $\Delta x$ by
  dividing each entry by $\Delta_{max}$ then produces a vector
  $v(\epsilon) \ \dot = \ \frac{1}{\Delta_{max}} \Delta x$ with the
  following properties:
  \begin{enumerate}
  \item  $v(\epsilon) \in S$.
  \item For $i=1,\dots,d$, $|v_i(\epsilon)| \le \delta(\epsilon)C$.
  \item There is at least one entry in $v(\epsilon)$ with norm $1$
    (the one for which the maximum in the definition of $\Delta_{max}$
    was achieved), and none have a higher norm.
  \item $1 \le |v(\epsilon)| \le m$.
  \end{enumerate}
  Property 4 follows from property 3.  $\epsilon >0$ was arbitrary, so
  we may consider a sequence $\{\epsilon_n\}$ such that $\epsilon_n >
  \epsilon_{n+1}$ and $\epsilon_n \to 0$.  For each $\epsilon_n$ we
  may redo the work above.  This leads to a sequence of vectors
  $\{v(\epsilon_n)\}$ and a sequence of numbers
  $\{\delta(\epsilon_n)\}$ such that $\delta(\epsilon_n) \to 0$ and
  for each $n$ all four properties above hold.  Because each vector
  from the sequence $\{v(\epsilon_n)\}$ is contained in the compact
  space $\{v : 1 \le |v| \le m \} \cap S$, there is a convergent
  subsequence $\{ v(\epsilon_{n_k}) \}$ and a vector $z_0$ such that
  $v(\epsilon_{n_k}) \to z_0 \in \{v : 1 \le |v| \le m \} \cap S
  \subset S$, as $k \to \infty$.  Note that $z_0$ can not be the zero
  vector because $|z_0| > 1$.  However, $\delta(\epsilon_{n_k}) \to
  0$, and so the first $d$ components of $z_0$ are equal to zero.
  Hence, supp$(z_0) \subset$ supp$(y)$.
\end{proof}

\begin{theorem}
  Let $\{\S,\C,\Re\}$ be a weakly reversible, deficiency zero, mass
  action chemical reaction system with dynamics given by equation
  \eqref{eq:main}.  Suppose that for each $c \in \R^m_{>0}$ and each
  semi-locking set $W$, the set $[(c + S) \cap \R^m_{\ge 0}] \cap L_W$
  is either empty or discrete.  Then the unique positive equilibrium
  of each stoichiometric compatibility class guaranteed to exist by
  the Deficiency Zero Theorem is globally asymptotically stable
  relative to its compatibility class.
  \label{thm:main}
\end{theorem}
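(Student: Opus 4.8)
The plan is to show that, under the stated hypothesis, every trajectory originating in $\R^m_{>0}$ is persistent in the sense of Definition \ref{def:persistence}, and then to obtain convergence to $\bar x$ from Theorem \ref{thm:converge} exactly as in the proof of Corollary \ref{cor:noequiglobal}. So first I would fix $x(0) \in \R^m_{>0}$, set $c = x(0)$, and let $\bar x$ denote the unique equilibrium of the positive compatibility class $(c + S) \cap \R^m_{>0}$ supplied by the Deficiency Zero Theorem (Theorem \ref{thm:def0}). Writing $V$ for the Lyapunov function \eqref{eq:lyapunov} associated to $\bar x$, each summand $V_i$ is nonnegative and satisfies $V_i(x_i) \to \infty$ as $x_i \to \infty$, so $V$ itself tends to infinity as $|x| \to \infty$ on $\R^m_{>0}$. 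Since $V$ is nonincreasing along the trajectory, $x(t)$ remains in a sublevel set of $V$ and is therefore bounded; combined with Lemma \ref{lemma:pos}, the set $\omega(x(0))$ is nonempty, compact and invariant, and is contained in $(c+S) \cap \R^m_{\ge 0}$.

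The heart of the argument is the claim that $\omega(x(0)) \cap \partial \R^m_{>0} = \emptyset$. Suppose instead that some $y \in \omega(x(0))$ has a vanishing coordinate, and let $W = \{X_i : y_i = 0\}$, a nonempty set of species with $y \in L_W$. By Theorem \ref{thm:omegapoints}, $W$ is a semi-locking set, and by Proposition \ref{prop:main} there is a nonzero $z_0 \in S$ with $\mathrm{supp}(z_0) \subset \mathrm{supp}(y)$. I would then verify that $y + t z_0 \in [(c+S) \cap \R^m_{\ge 0}] \cap L_W$ for all sufficiently small $|t|$: the coordinates indexed by $W$ vanish identically because $z_0$ is supported inside $\mathrm{supp}(y)$; the remaining coordinates of $y$ are strictly positive and hence stay positive (so the point stays in $\R^m_{\ge 0}$ and in the same stratum $L_W$) for small $|t|$; and $z_0 \in S$ keeps us in $c + S$. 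Since $z_0 \ne 0$, this produces a nondegenerate segment inside $[(c+S) \cap \R^m_{\ge 0}] \cap L_W$, so that set is neither empty (it contains $y$) nor discrete, contradicting the hypothesis. Hence $\omega(x(0))$ is a compact subset of the open set $\R^m_{>0}$, and in particular is bounded away from $\partial \R^m_{>0}$.

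To conclude, note that $d(x(t), \omega(x(0))) \to 0$ because the forward trajectory is bounded, so $x(t)$ stays bounded away from the boundary of $C := (\bar x + S) \cap \R^m_{>0}$ for all large $t$. The three hypotheses of Theorem \ref{thm:converge} hold on $C$ — $V \ge 0$ with equality only at $\bar x$, $dV/dt \le 0$ with equality only at $\bar x$, and $V \to \infty$ as $|x| \to \infty$ — so that theorem gives either $x(t) \to \bar x$ or $x(t) \to \partial C$, and the latter has just been excluded. Thus $x(t) \to \bar x$; since $x(0) \in \R^m_{>0}$ was arbitrary and $\bar x$ is locally asymptotically stable by Theorem \ref{thm:def0}, $\bar x$ is globally asymptotically stable relative to its compatibility class. (Alternatively one may finish by the usual LaSalle observation: $V$ is constant on the invariant set $\omega(x(0))$, hence $dV/dt \equiv 0$ there, forcing $\omega(x(0)) = \{\bar x\}$.)

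I expect the only delicate point to be the support bookkeeping in the claim — checking that the translate $y + t z_0$ lands simultaneously in $c + S$, in $\R^m_{\ge 0}$, and in the \emph{same} stratum $L_W$ — but this is routine once Proposition \ref{prop:main} has delivered a nonzero $z_0$ with $\mathrm{supp}(z_0) \subset \mathrm{supp}(y)$; essentially all of the analytic difficulty has already been absorbed into the proof of that proposition, and the remaining work is a short deduction from it together with Theorems \ref{thm:omegapoints} and \ref{thm:converge}.
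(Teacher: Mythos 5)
Your proposal is correct and follows essentially the same route as the paper: Theorem \ref{thm:omegapoints} identifies the semi-locking set $W$ for a hypothetical boundary $\omega$-limit point $y$, Proposition \ref{prop:main} supplies the nonzero $z_0\in S$ with $\mathrm{supp}(z_0)\subset\mathrm{supp}(y)$, and the segment $y+\eta z_0$ contradicts discreteness of $[(c+S)\cap\R^m_{\ge 0}]\cap L_W$, after which Theorem \ref{thm:converge} yields convergence to $\bar x$. The only difference is cosmetic ordering (you establish persistence first and then invoke Theorem \ref{thm:converge}, while the paper argues by contradiction from failure of global stability), so no further comment is needed.
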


\begin{proof}
  
  We suppose, in order to find a contradiction, that there is a
  positive equilibrium, $\bar x$, that is not globally asymptotically
  stable relative to its compatibility class.  By Theorems
  \ref{thm:omegapoints}, \ref{thm:def0}, and \ref{thm:converge}, there
  is a semi-locking set $W$, a $\xi \in \R^m_{>0}$, and a vector $y$
  such that $y \in [(\bar x + S) \cap \R^m_{\ge 0}] \cap L_W$ and $y
  \in \omega(\xi)$.  By Proposition \ref{prop:main}, there exists a
  nonzero $z_0 \in S$ such that supp$(z_0) \subset$ supp$(y)$.
  Because $y \in \bar x + S$ and $z_0 \in S$, for any $\eta > 0$ we
  have $y + \eta z_0 \in \bar x + S$.  Further, because supp$(z_0)
  \subset$ supp$(y)$, if $\eta$ is small enough we have that $y + \eta
  z_0 \in \R^m_{\ge 0} \cap L_W$.  But this is valid for all $\eta$
  small enough, and so $[(\bar x + S) \cap \R^m_{\ge 0}] \cap L_W$ is
  not discrete.  This is a contradiction and so the result is shown.
\end{proof}

\begin{corollary}
  Suppose that for a weakly reversible deficiency zero chemical
  reaction system with mass action kinetics, each semi-locking set is
  a locking set.  Suppose further that within each stoichiometric
  compatibility class, the set of equilibria on the boundary is
  discrete.  Then the unique positive equilibrium of each
  stoichiometric compatibility class guaranteed to exist by the
  Deficiency Zero Theorem is globally asymptotically stable relative
  to its compatibility class.
  \label{cor:locking}
\end{corollary}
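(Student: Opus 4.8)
The plan is to deduce the corollary directly from Theorem~\ref{thm:main}, whose conclusion is exactly the one we want; so all that needs to be done is to verify its hypothesis, namely that for every $c \in \R^m_{>0}$ and every semi-locking set $W$, the set $[(c+S)\cap\R^m_{\ge 0}]\cap L_W$ is either empty or discrete.

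To do this I would fix such a $c$ and such a $W$ and, assuming the intersection is nonempty, argue as follows. By assumption every semi-locking set of the network is a locking set, so $W$ is a locking set; that is, every reactant complex contains some species in $W$. Now take an arbitrary $y \in [(c+S)\cap\R^m_{\ge 0}]\cap L_W$. By the definition of $L_W$ in \eqref{eq:boundary} we have $y_i = 0$ for each $X_i \in W$, so by property (2) of the kinetics every reaction rate vanishes at $y$ (each source complex contains a species of $W$ whose concentration at $y$ is zero), and hence $f(y) = 0$ by \eqref{eq:main}. Thus $y$ is an equilibrium, and since $W$ is nonempty $y$ has at least one zero coordinate, so it is an equilibrium on the boundary of the positive stoichiometric compatibility class $(c+S)\cap\R^m_{>0}$. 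Therefore $[(c+S)\cap\R^m_{\ge 0}]\cap L_W$ is contained in the set of boundary equilibria of that compatibility class.

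By hypothesis that set of boundary equilibria is discrete, and since any subset of a discrete set is discrete in the subspace topology, $[(c+S)\cap\R^m_{\ge 0}]\cap L_W$ is discrete. As $c$ and $W$ were arbitrary (and the empty case is harmless), the hypothesis of Theorem~\ref{thm:main} holds, and the asserted global asymptotic stability follows. The argument is short once Theorem~\ref{thm:main} is available; the only place calling for a moment's care — the closest thing to an obstacle — is the observation that the locking property turns \emph{every} point of $L_W$ lying in the class into an equilibrium, which is precisely the ``locked in place'' remark recorded after Definition~\ref{def:locking}, combined with the fact that discreteness passes to subsets.
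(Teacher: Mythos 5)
Your proposal is correct and follows essentially the same route as the paper: use the locking-set hypothesis to conclude that every point of $[(c+S)\cap\R^m_{\ge 0}]\cap L_W$ is a boundary equilibrium, deduce discreteness of each such intersection from the assumed discreteness of the boundary equilibria, and invoke Theorem~\ref{thm:main}. You simply spell out the details (vanishing of all rates at points of $L_W$, and discreteness passing to subsets) that the paper's one-line proof leaves implicit, and you correctly note that only the inclusion of the intersections into the set of boundary equilibria is needed, not the equality the paper asserts.
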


\begin{proof}
  Because each semi-locking set is a locking set, the set of boundary
  equilibria for a given compatibility class is precisely given by
  $\bigcup_W [(c + S) \cap \R^m_{\ge 0}] \cap L_W$, where the union is
  over the set of semi-locking sets.  Therefore, each $[(c + S) \cap
  \R^m_{\ge 0}] \cap L_W$ is discrete and invoking Theorem
  \ref{thm:main} completes the proof.
\end{proof}

\begin{corollary}
  Suppose that a weakly reversible deficiency zero chemical reaction
  system with mass action kinetics has only one linkage class.
  Suppose further that within each stoichiometric compatibility class,
  the set of equilibria on the boundary is discrete.  Then the unique
  positive equilibrium of each stoichiometric compatibility class
  guaranteed to exist by the Deficiency Zero Theorem is globally
  asymptotically stable relative to its compatibility class.
  \label{cor:singleclass}
\end{corollary}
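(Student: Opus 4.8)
The plan is to reduce this corollary to Corollary \ref{cor:locking}. That result already tells us that global asymptotic stability follows whenever every semi-locking set is a locking set and the set of boundary equilibria within each stoichiometric compatibility class is discrete. The discreteness hypothesis is given here, so it suffices to show that the single linkage class assumption forces every semi-locking set to be a locking set.

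To carry this out I would first restate the defining property of a semi-locking set $W$ in contrapositive form: whenever $y \to y' \in \Re$ and the reactant complex $y$ contains no element of $W$, the product complex $y'$ also contains no element of $W$. Iterating, starting from any complex disjoint from $W$ and following any directed path in the reaction graph, every complex visited is disjoint from $W$. Next I would use weak reversibility together with the single linkage class assumption: the reaction graph is then a single strongly connected component, so there is a directed path from any complex to any other complex. Since $W$ is nonempty and (as is tacitly assumed throughout) each species occurs in at least one complex, some complex $z$ contains an element of $W$. If some reactant complex $y_0$ were disjoint from $W$, then the directed path from $y_0$ to $z$ would, by the previous observation, remain among complexes disjoint from $W$, contradicting the choice of $z$. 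Hence every reactant complex contains an element of $W$; that is, $W$ is a locking set.

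With this established, the conclusion follows immediately from Corollary \ref{cor:locking}. The only step with any real content is the graph-theoretic argument that semi-locking implies locking in the single-linkage-class setting, and even that is short once one has the contrapositive reformulation and strong connectivity in hand; everything else is a direct appeal to the earlier corollary. A minor technicality to dispose of is a species appearing in no complex at all, which is inert ($\dot x_i \equiv 0$) and may be removed from the system at the outset without loss of generality.
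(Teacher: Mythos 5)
Your proposal is correct and follows the paper's own route exactly: show that weak reversibility plus a single linkage class forces every semi-locking set to be a locking set, then invoke Corollary \ref{cor:locking}. The paper simply asserts this equivalence (it was discussed informally after Definition \ref{def:locking}), whereas you supply the short strong-connectivity argument explicitly, which is a harmless elaboration of the same idea.
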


\begin{proof}
  For single linkage class systems that are weakly reversible, each
  semi-locking set is a locking set.  Using Corollary
  \ref{cor:locking} completes the proof.
\end{proof}

\subsection{Connection with extreme points}
We connect our results to a condition on the extreme points of the
positive stoichiometric compatibility classes.

\begin{proposition}
  For $y \in \R^m_{\ge 0}$, let $W = \{X_i : y_i = 0\} =
  \hbox{supp}(y)^C$.  Then the following are equivalent:
  \begin{enumerate}[(i)]
  \item $y$ is an extreme point of $(y + S) \cap \R^m_{\ge 0}$.
    \label{item1}
  \item $\left[(y+S) \bigcap \R^m_{\ge0}\right] \bigcap L_W = \{y\}$.
    \label{item2}
  \end{enumerate}
  \label{prop:extreme}
\end{proposition}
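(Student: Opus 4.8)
The plan is to prove both implications directly from the definitions of extreme point and of the set $L_W$. Throughout, write $P = (y+S) \cap \R^m_{\ge 0}$; this is a convex polyhedral set, and $y \in P$ since $0 \in S$. Recall that a point $p$ of a convex set $P$ is \emph{extreme} if it cannot be written as $p = \tfrac12(a+b)$ with $a,b \in P$ and $a \ne b$; equivalently, there is no nonzero $v$ with $p+v \in P$ and $p-v \in P$.

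First I would prove \eqref{item2} $\Rightarrow$ \eqref{item1} by contraposition. Suppose $y$ is not an extreme point of $P$. Then there is a nonzero $v \in \R^m$ with $y + v \in P$ and $y - v \in P$; in particular $v \in S$ (since $y \pm v \in y + S$) and, for each $i$ with $y_i = 0$, both $y_i + v_i \ge 0$ and $y_i - v_i \ge 0$ force $v_i = 0$. Hence $\mathrm{supp}(v) \subset \mathrm{supp}(y)$, so for all sufficiently small $\eta > 0$ the point $y + \eta v$ lies in $\R^m_{\ge 0}$ and still has exactly the same zero coordinates as $y$, i.e.\ $y + \eta v \in L_W$; it also lies in $y + S$. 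Thus $y + \eta v \in P \cap L_W$ for a whole interval of $\eta$, so $P \cap L_W \ne \{y\}$, contradicting \eqref{item2}. (Note $y + \eta v \ne y$ since $v \ne 0$.)

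Next, for \eqref{item1} $\Rightarrow$ \eqref{item2}, again argue by contraposition: assume $P \cap L_W \ne \{y\}$. Since $y \in P \cap L_W$ always holds, there is some $z \in P \cap L_W$ with $z \ne y$. Set $v = z - y$. Because $z, y \in y + S$ we get $v \in S$, and because $z$ and $y$ have \emph{the same} support (both lie in $L_W$), $v_i = 0$ whenever $y_i = 0$; so $\mathrm{supp}(v) \subset \mathrm{supp}(y)$ and $v \ne 0$. Now I would produce the two points exhibiting non-extremality of $y$: choose $\eta > 0$ small enough that $y + \eta v \ge 0$ \emph{and} $y - \eta v \ge 0$ — this is possible precisely because $v$ vanishes on the coordinates where $y$ vanishes, so only the strictly positive coordinates of $y$ need to absorb $\pm \eta v$, which they do for $\eta$ small. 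Both points lie in $y + S$, hence in $P$, and $y = \tfrac12\big((y+\eta v) + (y - \eta v)\big)$ with the two points distinct. Therefore $y$ is not an extreme point of $P$, contradicting \eqref{item1}.

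The only mildly delicate point — and the one I would state carefully rather than wave at — is the choice of $\eta$ in the second implication: one needs $y \pm \eta v$ both nonnegative, not just $y + \eta v$, which is exactly where the hypothesis $\mathrm{supp}(v) \subset \mathrm{supp}(y)$ is essential (on a coordinate $i$ with $y_i > 0$ pick $\eta < y_i / |v_i|$ over the finitely many $i$ with $v_i \ne 0$; on coordinates with $y_i = 0$ we have $v_i = 0$ so there is nothing to check). Everything else is a routine unwinding of the convexity definition together with the observation that membership in $(y+S)\cap\R^m_{\ge0}\cap L_W$ is an affine-plus-open-support condition, so it propagates along short segments through $y$ in both directions. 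I would also remark that this proposition makes the hypothesis of Theorem~\ref{thm:main} transparent: $[(c+S)\cap\R^m_{\ge0}]\cap L_W$ is discrete for every $c$ and every semi-locking set $W$ if and only if every boundary point of a positive compatibility class that lies on such an $L_W$ is in fact an extreme point of that (non-negative) class.
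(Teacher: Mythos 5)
Your proof is correct and follows essentially the same route as the paper: both directions are routine unwindings of the definition of an extreme point together with the observation that nonnegativity on the coordinates where $y$ vanishes forces the perturbation in $S$ to vanish on $W$, the only difference being that you phrase each implication contrapositively while the paper argues by contradiction. If anything, your explicit small-$\eta$ choice is slightly more careful than the paper's version, which passes directly from the convex combination $y=\lambda v_1+(1-\lambda)v_2$ to the claim $v_1,v_2\in L_W$ without noting that one may first need to move closer to $y$ along the segment to guarantee the coordinates outside $W$ stay nonzero.
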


\begin{proof}
  (\eqref{item1} $\implies$ \eqref{item2}) Suppose that \eqref{item1}
  is true, but that \eqref{item2} is not.  Then, because $\left[(y+S)
    \bigcap \R^m_{\ge 0}\right] \bigcap L_W$ is not discrete, there
  exists a $v \in S \bigcap L_W$ such that for sufficiently small
  $\epsilon$, $y \pm \epsilon v \in \left[(y+S) \bigcap \R^m_{\ge
      0}\right] \bigcap L_W$.  Noting that $y = (1/2)(y + \epsilon v)
  + (1/2)(y - \epsilon v)$ then shows $y$ is not an extreme point of
  $(y + S) \cap \R^m_{\ge 0}$, which is a contradiction.

  (\eqref{item2} $\implies$ \eqref{item1}) Suppose that \eqref{item2}
  is true, but that \eqref{item1} is not.  Because $y$ is not an
  extreme point of $(y + S) \cap \R^m_{\ge 0}$, there exists nonzero
  vectors $v_1 \ne y$, $v_2 \ne y$ in $(y + S) \bigcap \R^m_{\ge 0}$
  and $0 < \lambda < 1$ such that
  \begin{equation}
    y = \lambda v_1 + (1 - \lambda)v_2.
    \label{eq:lambda}
  \end{equation}
  Because $v_1,v_2 \in (y + S)\bigcap \R^m_{\ge 0}$, there exists $u,w
  \in S$ such that $v_1 = y + u$ and $v_2 = y + w$, and $u_i,w_i \ge
  0$ if $X_i \in W$.  However, because $\lambda, 1-\lambda >0$, we see
  by equation \eqref{eq:lambda} that $u_i,w_i =0$ for all $X_i \in W$.
  Therefore, $v_1,v_2 \in \left[(y+S) \bigcap \R^m_{\ge0}\right]
  \bigcap L_W$, contradicting \eqref{item2}.
\end{proof}

Theorem \ref{thm:main} can now be reformulated in the following way.

\begin{theorem}
  Let $\{\S,\C,\Re\}$ be a weakly reversible, deficiency zero, mass
  action chemical reaction system with dynamics given by equation
  \eqref{eq:main}.  For any boundary point $y \in [(c + S) \cap
  \R^m_{\ge 0}]$ of a positive stoichiometric compatibility class, let
  $W_y = \{X_i : y_i = 0\} = \hbox{supp}(y)^C$.  Finally, suppose that
  $W_y$ is a semi-locking set only if $y$ is an extreme point.  Then
  the unique positive equilibrium of each stoichiometric compatibility
  class guaranteed to exist by the Deficiency Zero Theorem is globally
  asymptotically stable relative to its compatibility class.
  \label{thm:secondary}
\end{theorem}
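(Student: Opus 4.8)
The plan is to reduce this statement directly to Theorem \ref{thm:main} by showing that the hypothesis ``$W_y$ is a semi-locking set only if $y$ is an extreme point'' is exactly a reformulation of the hypothesis ``$[(c+S)\cap \R^m_{\ge 0}]\cap L_W$ is empty or discrete for every $c\in\R^m_{>0}$ and every semi-locking set $W$,'' and then invoke Theorem \ref{thm:main} verbatim. The bridge between the two formulations is Proposition \ref{prop:extreme}, which for a boundary point $y$ with $W=\mathrm{supp}(y)^C$ identifies ``$y$ is an extreme point of $(y+S)\cap\R^m_{\ge 0}$'' with ``$[(y+S)\cap\R^m_{\ge 0}]\cap L_W=\{y\}$.'' So the essential observation is that the set $[(c+S)\cap\R^m_{\ge 0}]\cap L_W$, when nonempty, is discrete if and only if every point it contains is an extreme point of its compatibility class.

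Concretely, I would argue as follows. Suppose the hypothesis of Theorem \ref{thm:secondary} holds, fix $c\in\R^m_{>0}$ and a semi-locking set $W$, and suppose $[(c+S)\cap\R^m_{\ge 0}]\cap L_W$ is nonempty; I must show it is discrete. Let $y$ be any point of this set. Then $y_i=0$ exactly for $X_i\in W$, so $W=\{X_i:y_i=0\}=W_y$, and since $y\in c+S$ we have $c+S=y+S$, hence $[(c+S)\cap\R^m_{\ge 0}]\cap L_W=[(y+S)\cap\R^m_{\ge 0}]\cap L_{W_y}$. Because $W_y=W$ is a semi-locking set, the hypothesis forces $y$ to be an extreme point of $(y+S)\cap\R^m_{\ge 0}$, and then Proposition \ref{prop:extreme} (implication \eqref{item1} $\implies$ \eqref{item2}) gives $[(y+S)\cap\R^m_{\ge 0}]\cap L_{W_y}=\{y\}$. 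Thus the set equals $\{y\}$ — in particular it is a single point, hence discrete. (If one prefers to handle the general case where different points of the set might correspond to different $W_y$'s, note that every point $y$ in $[(c+S)\cap\R^m_{\ge 0}]\cap L_W$ has $W_y=W$ by definition of $L_W$, so this subtlety does not arise.) Applying Theorem \ref{thm:main} then yields global asymptotic stability of the unique positive equilibrium in each positive stoichiometric compatibility class.

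The only step that requires any care is making sure the indexing matches up: one must check that for $y\in[(c+S)\cap\R^m_{\ge 0}]\cap L_W$ the shifted class $y+S$ coincides with $c+S$ (immediate, since $y-c\in S$) and that the semi-locking set appearing in the hypothesis of Theorem \ref{thm:main} is literally the set $W_y=\mathrm{supp}(y)^C$ attached to $y$ in Theorem \ref{thm:secondary} (immediate from the definition of $L_W$). There is no genuine analytic obstacle here — all of the dynamical content was already extracted in Proposition \ref{prop:main} and Theorem \ref{thm:main}; this theorem is a purely geometric restatement, and the ``hard part'' is simply confirming that Proposition \ref{prop:extreme} applies with exactly the same $W$ on both sides. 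I would therefore write the proof in three or four sentences: fix $c$ and a semi-locking $W$, assume the intersection is nonempty, pick $y$ in it, observe $W=W_y$ is semi-locking so by hypothesis $y$ is extreme, apply Proposition \ref{prop:extreme} to conclude the intersection is the singleton $\{y\}$, and then cite Theorem \ref{thm:main}.
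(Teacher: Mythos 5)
Your proposal is correct and follows essentially the same route as the paper's own proof: fix a semi-locking set $W$, dispose of the empty case, and otherwise note that any $y$ in the intersection has $W=W_y$ and $c+S=y+S$, so the extreme-point hypothesis together with Proposition \ref{prop:extreme} makes the intersection the singleton $\{y\}$, after which Theorem \ref{thm:main} applies. The only (harmless) difference is that you state explicitly that the intersection is a single point, whereas the paper records it only as ``discrete.''
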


\begin{proof}
  Suppose $W$ is a semi-locking set.  Let $y \in L_W$ be such that
  there exists a $c \in \R^m_{>0}$ with $y \in c + S$.  If no such $y$
  and $c$ exist, then $[(c + S) \bigcap \R^m_{\ge 0}] \bigcap L_W =
  \emptyset$ for all $c \in \R^m_{>0}$.  If such a $y$ and $c$ do
  exist, then $W = W_y$ and, by assumption, $y$ is an extreme point.
  Thus, by Proposition \ref{prop:extreme}, $\left[(y+S) \bigcap
    \R^m_{\ge0}\right] \bigcap L_W = \left[(c+S) \bigcap
    \R^m_{\ge0}\right] \bigcap L_W$ is discrete. Invoking Theorem
  \ref{thm:main} completes the proof.
\end{proof}
\section{Examples}
\label{sec:examples}

We begin with an example found in \cite{ChavezThesis} for a
receptor-ligand model.  See \cite{ChavezThesis} for full details.
\begin{example}
  Consider the following system, which we assume has mass action
  kinetics
  \begin{equation}
    \begin{array}{ccc}
      2A + C & \rightleftarrows & A + D\\ 
      \left \uparrow \phantom{|} \right \downarrow & &
      \left \uparrow \phantom{|} \right \downarrow\\ 
      B + C & \leftrightarrows & E
    \end{array}.
    \label{ex:ex1}
  \end{equation}
  For this example there are four complexes, one linkage class, and
  the dimension of the stoichiometric compatibility class is easily
  verified to be three.  Therefore, the system has a deficiency of
  zero and our results apply.  The minimal semi-locking sets (that is,
  those that must be contained in all others) are given by $W_1 =
  \{A,B,E\}$, $W_2 = \{A,C,E\}$, and $W_3 = \{C,D,E\}$.  Therefore,
  showing that the set $\bigcup_{i = 1}^3 [(c + S) \cap \R^5_{\ge 0}]
  \cap L_{W_i}$ is discrete for any $c \in \R^5_{>0}$ would also show
  that the sum over all semi-locking sets is discrete.  For this
  example, it is easily verified that
  \begin{equation}
    S = \hbox{Span}\left\{ 
      \left[ \begin{array}{c}
          1\\ 0\\ 0\\ 1\\ -1
        \end{array}\right],
      \left[ \begin{array}{c}
          0\\ 1\\ 0\\ 2\\ -2
        \end{array}\right],
      \left[ \begin{array}{c}
          0\\ 0\\ 1\\ -2\\ 1
        \end{array}\right] \right\}.
    \label{eq:spanning1}
  \end{equation}
  One method to show that for a given $i$ the set $[(c + S) \cap
  \R^5_{\ge 0}] \cap L_{W_i}$ is at most discrete is to demonstrate
  that there are no non-zero vectors contained in $S$ with support
  given by $W_i^C$. This method bypasses the need to check whether the
  intersection $[(c + S) \cap \R^5_{\ge 0}] \cap L_{W_i}$ is
  non-empty.  It is easily verified that there are no non-zero vectors
  contained in $S$ with support given by $W_1^C$ or $W_2^C$.  In
  \cite{ChavezThesis} it is shown that for each $c \in \R^5_{>0}$, $(c
  + S) \cap \R^5_{\ge 0}$ does intersect one of $L_{W_1}$ or
  $L_{W_2}$.  Therefore, there are always equilibria on the boundary,
  however, by our results or those found in \cite{ChavezThesis}, they
  will not affect the global stability of the interior equilibria.

  Let $U_3 = \{x \in \R^5 \ | \ \hbox{supp}(x) \in W_3^C\}$.  It is
  easy to show that $U_3 \bigcap S = \hbox{span}\{[2, -1, 0, 0,
  0]^T\}$.  Thus, the method used in the previous paragraph does not
  work.  Therefore, for our results to apply, we need to verify that
  $[(c + S) \cap \R^5_{\ge 0}] \cap L_{W_3} = \emptyset,$ for any $c
  \in \R^5_{>0}$.  Because $L_{W_3}$ is characterized by having the
  last three entries equal to zero, in order to prove that $[(c + S)
  \cap \R^5_{\ge 0}] \cap L_{W_3} = \emptyset$ for any $c \in
  \R^5_{>0}$, it is sufficient to show that the space spanned by the
  last three entries of the vectors in \eqref{eq:spanning1} does not
  contain a vector with strictly negative components.  We have
  \begin{equation}
    \hbox{Span}\left\{ 
      \left[ \begin{array}{c}
          0\\ 1\\ -1
        \end{array}\right],
      \left[ \begin{array}{c}
          0\\ 2\\ -2
        \end{array}\right],
      \left[ \begin{array}{c}
          1\\ -2\\ 1
        \end{array}\right] \right\}
    = \hbox{Span}\left\{ 
      \left[ \begin{array}{c}
          0\\ -1\\ 1
        \end{array}\right],
      \left[ \begin{array}{c}
          1\\ -1\\ 0
        \end{array}\right] \right\},
    \label{eq:spanning2}
  \end{equation}
  which does not include a strictly negative vector.  Thus, $[(c + S)
  \cap \R^5_{\ge 0}] \cap L_{W_3} = \emptyset$ for any $c \in
  \R^5_{>0}$.  Combining all of the above with Theorem \ref{thm:main}
  shows that for any choice of rate constants and initial condition,
  the system \eqref{ex:ex1} has a globally asymptotically stable
  equilibrium value.
\end{example}

\begin{example}
  Consider the system 
  \begin{equation}
    \begin{array}{ccccc}
      2A &\rightleftarrows & A+B & \rightleftarrows & B+C \ . 
    \end{array}
    \label{ex:ex2}
  \end{equation}
  There are three complexes, one linkage class, and the dimension of
  the stoichiometric compatibility class is two.  Therefore, the
  system \eqref{ex:ex2} has a deficiency of zero.  The minimal
  semi-locking sets are $W_1 = \{A,B\}$ and $W_2 = \{A,C\}$.  The
  stoichiometric subspace is of dimension two and the quantity $A + B
  + C$ is conserved.  Thus, each stoichiometric compatibility class is
  a plane that intersects each of $L_{W_1} = \{v : v_1 = v_2 = 0, v_3
  \ne 0\}$ and $L_{W_2} = \{v : v_1 = v_3 = 0, v_2 \ne 0\}$ in
  precisely one point.  See Figure \ref{fig:example2}.  Therefore, by
  Theorem \ref{thm:main}, for any choice of rate constants and initial
  condition, the system \eqref{ex:ex2} has a globally asymptotically
  stable equilibrium value.  We note that it is easily verified that
  the eigenvalues of the linearized problem around the equilibria
  associated with the semi-locking set $W_1$ are all zero, and so the
  results of \cite{ChavezThesis} do not apply here.

  \begin{figure}
    \begin{center}
      \includegraphics[width=2in]{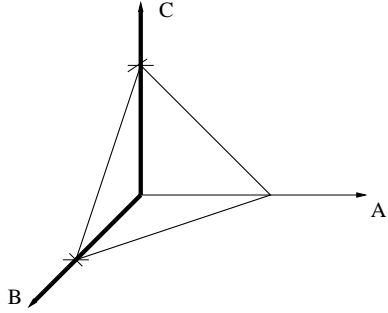}
    \end{center}
    \caption{The stoichiometric compatibility class for the system
      \eqref{ex:ex2} is a plane defined by the conservation law $A + B
      + C = M$ for some $M > 0$.  This plane intersects the sets
      $L_{W_1}$ and $L_{W_2}$ (bolded axes) each in precisely one
      point.}
    \label{fig:example2}
  \end{figure}

  \label{example2}
\end{example}

\begin{example}
  Consider the system
  \begin{equation}
    \begin{array}{ccc}
       2A & \rightleftarrows & A+B\\
    \end{array} \hspace{.5in}
    \begin{array}{ccc}
      2B & \rightleftarrows & A+C \ .\\
    \end{array}
    \label{ex:ex3}
  \end{equation}
  There are four complexes, two linkage classes, and the dimension of
  the stoichiometric compatibility class is two.  Therefore, the
  system \eqref{ex:ex3} has a deficiency of zero.  The only minimal
  semi-locking set is $W = \{A,B\}$, and this is also a locking set.
  The stoichiometric subspace is of dimension two and the quantity $A
  + B + C$ is conserved. Thus, each stoichiometric compatibility class
  is a plane that intersects $L_W = \{v : v_1 = v_2 = 0, v_3 \ne 0\}$
  in precisely one point.  Therefore by Theorem \ref{thm:main} or
  Corollary \ref{cor:locking}, for any choice of rate constants and
  initial condition, the system \eqref{ex:ex3} has a globally
  asymptotically stable equilibrium value. It is easily verified that
  the boundary equilibria are not hyperbolic with respect to their
  compatibility classes, and so the results of \cite{ChavezThesis} do
  not apply in this case.
\end{example}

\section{Non-mass action kinetics}
\label{sec:more}

In \cite{Sontag2001}, Sontag extended the Deficiency Zero Theorem to
systems with non-mass action kinetics.  He considered weakly
reversible, deficiency zero system whose kinetic functions are given
by
\begin{equation}
  R_{y \to y'}(x) = k_{y \to y'}\theta(x_1)^{y_1} \cdots \theta(x_m)^{y_m},
  \label{eq:nonmass}
\end{equation}
where the functions $\theta_i:\R \to [0,\infty)$ satisfy the following:
\begin{enumerate}
\item Each $\theta_i$ is locally Lipschitz.
\item $\theta_i(0) = 0$.
\item $\int_0^1 |\ln(\theta_i(y)|dy < \infty$.
\item The restriction of $\theta_i$ to $\R_{\ge 0}$ is strictly
  increasing and onto $\R_{\ge 0}$.
\end{enumerate}
To prove the local asymptotic stability of the unique equilibrium
within each stoichiometric compatibility class the following Lyapunov
function was used
\begin{equation}
  V(x) = \sum_{i = 1}^m \int_{\bar x_i}^{x_i}\left( \rho_i(s) -
    \rho_i(\bar x_i) \right) ds, 
  \label{eq:lyapunov2}
\end{equation}
where $\rho_i(s) = \ln \theta_i(s)$ and $\bar x$ is the unique
equilibrium within the positive stoichiometric compatibility class.
Note that $\theta(x) = |x|$ gives mass action kinetics, in which case
the Lyapunov function given in equation \eqref{eq:lyapunov2} is the
same as that in equation \eqref{eq:lyapunov}.  The only dynamical
property of the Deficiency Zero Theorem used in this paper was that
$\nabla V(x) \to -\infty$ as $x_i \to 0$.  We note that for the
Lyapunov function \eqref{eq:lyapunov2}, we still have that property
because
\begin{equation*}
\nabla V(x) = \sum_{i=1}^m \rho_i(x_i) - \rho_i(\bar x_i),
\end{equation*}
and $\rho_i(x_i) = \ln \theta_i(x_i) \to -\infty$ as $x_i \to 0$ by
the properties of $\theta_i(\cdot)$ given above.  Therefore, our
results in this paper, and in particular Theorem \ref{thm:main},
Corollary \ref{cor:locking}, Corollary \ref{cor:singleclass}, and
Theorem \ref{thm:secondary}, are valid in the setting
\eqref{eq:nonmass}.

\vspace{.25in}
\begin{center}
  {\bf \large Acknowledgments}
\end{center}

I would like to thank Gheorghe Craciun for several illuminating
conversations pertaining to chemical reaction systems.  I would also
like to thank Martin Feinberg for pointing out the connection with
extreme points.  This work was done under the support of the grants
NSF-DMS-0109872 and NSF-DMS-0553687.

\bibliographystyle{amsplain}
\bibliography{AndGlobalRev}

\providecommand{\bysame}{\leavevmode\hbox to3em{\hrulefill}\thinspace}
\providecommand{\MR}{\relax\ifhmode\unskip\space\fi MR }
% \MRhref is called by the amsart/book/proc definition of \MR.
\providecommand{\MRhref}[2]{%
  \href{http://www.ams.org/mathscinet-getitem?mr=#1}{#2}
}
\providecommand{\href}[2]{#2}
\begin{thebibliography}{10}

\bibitem{AndThesis}
D.~F. Anderson, \emph{Stochastic perturbations of biochemical reaction
  systems}, Ph.D. thesis, Duke University, 2005.

\bibitem{Sontag2007}
David Angeli, Patrick~De Leenheer, and Eduardo~D. Sontag, \emph{A petri net
  approach to the study of persistence in chemical reaction networks},
  Mathematical Biosciences (2007), to appear.

\bibitem{Butler86b}
G.~Butler, H.I. Freedman, and P.~Waltman, \emph{Uniformly persistent systems},
  Proc. Amer. Math. Soc. \textbf{96} (1986), 425--430.

\bibitem{Butler86a}
G.~Butler and P.~Waltman, \emph{Persistence in dynamical systems}, J. Diff.
  Equ. \textbf{63} (1986), 255--263.

\bibitem{Card80}
T.C. Card, \emph{Persistence in food webs with general interactions},
  Mathematical Biosciences \textbf{51} (1980), 165--174.

\bibitem{ChavezThesis}
Madalena Chavez, \emph{Observer design for a class of nonlinear systems, with
  applications to biochemical networks}, Ph.D. thesis, Rutgers, 2003.

\bibitem{Feinberg72}
M.~Feinberg, \emph{Complex balancing in general kinetic systems}, Arch.
  Rational Mech. Anal. \textbf{49} (1972), 187--194.

\bibitem{FeinbergLec79}
\bysame, \emph{Lectures on chemical reaction networks}, Delivered at the
  Mathematics Research Center, Univ. Wisc.-Madison. Available for download at
  http://www.che.eng.ohio-state.edu/$\sim$feinberg/LecturesOnReactionNetworks,
  1979.

\bibitem{Feinberg87}
\bysame, \emph{Chemical reaction network structure and the stability of complex
  isothermal reactors - {I}. the deficiency zero and deficiency one theorems,
  review article 25}, Chem. Eng. Sci. \textbf{42} (1987), 2229--2268.

\bibitem{Feinberg95a}
\bysame, \emph{Existence and uniqueness of steady states for a class of
  chemical reaction networks}, Arch. Rational Mech. Anal. \textbf{132} (1995),
  311--370.

\bibitem{HornJack72}
F.~J.~M. Horn and R.~Jackson, \emph{General mass action kinetics}, Arch. Rat.
  Mech. Anal. \textbf{47} (1972), 81--116.

\bibitem{Siegel1994}
D.~Siegel and Y.F. Chen, \emph{Global stability of deficiency zero chemical
  networks}, Canadian Applied Mathematics Quarterly \textbf{2} (1994), no.~3,
  413--434.

\bibitem{Siegel2000}
D.~Siegel and D.~MacLean, \emph{Global stability of complex balanced
  mechanisms}, Journal of Mathematical Chemistry \textbf{27} (2000), 89--110.

\bibitem{Sontag2001}
Eduardo~D. Sontag, \emph{Structure and stability of certain chemical networks
  and applications to the kinetic proofreading of t-cell receptor signal
  transduction}, IEEE Trans. Auto. Cont. \textbf{46} (2001), no.~7, 1028--1047.

\bibitem{Thieme2000}
H.R. Thieme, \emph{Uniform persistence and permanence for non-autonomous
  semiflows in population biology}, Mathematical Biosciences \textbf{166}
  (2000), 173--201.

\end{thebibliography}

\end{document}